\documentclass{article}
\usepackage{amsmath}
\newtheorem{theorem}{Theorem}[section]
\newtheorem{lemma}[theorem]{Lemma}
\newtheorem{proposition}[theorem]{Proposition}
\newtheorem{corollary}[theorem]{Corollary}
\newtheorem{conjecture}[theorem]{Conjecture}
\newtheorem{definition}[theorem]{Definition}

\newenvironment{proof}[1][\it{Proof.}]{\begin{trivlist}
\item[\hskip \labelsep {\bfseries #1}]}{\end{trivlist}}

\newcommand{\qed}{\nobreak \ifvmode \relax \else
      \ifdim\lastskip<1.5em \hskip-\lastskip\
      \hskip1.5em plus0em minus0.5em \fi \nobreak
      \vrule height0.75em width0.5em depth0.25em\fi}

\title{ Integral dimension of a noetherian ring}
\author{{   Caijun  Zhou }
\\
{\small {\it Department of  Mathematics, Shanghai Normal University,}}\\
{\small {\it Shanghai, 200234, China}}}
\date{}

\begin{document}
\maketitle
\section {Introduction}

\ \ \ \  Throughout this paper we always use $R$ to denote a commutative
noetherian ring with an identity. The main purpose of the paper is to introduce a new notion, integral dimension,   for noetherian rings.

  Let $J$ be an ideal of $R$. An
element $x\in J$ is said to be integral over $J$, if there is an equation
of the form
$$x^n+a_1x^{n-1}+\cdots+a_n=0$$
where $a_i\in J^i$ for $1\leq i\leq n$. It is not difficult to show that the
set of all elements which are integral over $J$ is an ideal of $R$. We will use $\overline{J}$ to denote this ideal,
and call it the integral closure of the ideal $J$. An ideal $I$ with $I\subseteq \overline{J}$ will be said to be integral
over $J$.

 One of the main interests on $\overline{J}$ is when $\overline{J^n}$ is contained in a power of $J$. Let us recall several
 remarkable results established in the past years. Rees [Re] showed that if $R$ is an analytically unramified local
 ring and $J$ an ideal of $R$, then there is an integer $k$, depending on $J$, such that $\overline {J^n}\subseteq {J^{n-k}}$ for $n\geq k$.

 Let $R$ be the ring of convergent power series in $n$ variables $Z_1, \cdots, Z_n$ over the field of complex numbers  $\textbf{C}$.
 Let $f\in R$ be a non unit element and $J=(\partial f/\partial{Z_1}, \cdots, \partial f/\partial{Z_n})$ be the Jacobian
 ideal of $f$. It can be proved [cf. HS] that $f$ is integral over $J$. Brian\c{c}on and Skoda [BS] showed by analytic method that $f^n\in J$
 , and this is known as  the Brian\c{c}on-Skoda theorem.

 Lipman and Sathaye [LS] extended the theorem of Brian\c{c}on-Skoda to arbitrary  regular rings by purely algebraic method. They proved if $R$  ia a   regular ring of  dimension $d>0$, then for all ideals $J$ of $R$,   $\overline {J^n}\subseteq {J^{n-d+1}}$ for $n\geq d-1$.

 Later on, Huneke [ Hu, Theorem 4.13] proved a far reaching result for a broad class of noetherian rings. For example,  He obtained that, for any reduced excellent  local ring $R$, there exists an integer $k\geq 0$, such that for all ideals $J$ of $R$,  $\overline {J^n}\subseteq {J^{n-k}}$ for $n\geq k$. The result of Huneke is known as  the uniform  Brian\c{c}on-Skoda theorem. Recently,  Andersson and Wulcan  [AW] presented global effective versions of the   Brian\c{c}on-Skoda-Huneke theorem by analytic method.

 In this paper, instead of considering the integral closure of the power  ideal   $\overline{J^n}$, we will study the power of the integral closure of an ideal ${\overline J}^n$.   We  prove in the next section for any ideal $J$ of $R$,
 $$J^n\subseteq {\overline J}^n\subseteq \overline{J^n}.$$
 Hence the comparison of ${\overline J}^n$ with the power of $J$ may be easier than that of $\overline{J^n}$.
  Motivated by this, we introduce the  following notion of integral dimension for  noetherian rings.

 \begin{definition}
Let $R$ be a  noetherian ring.  If there exists an integer $k\geq 0$ such that  for all ideals  $J$ of $R$ and for $n\geq k$
 $${\overline J}^n\subseteq {J^{n-k}}, \eqno(1.1)$$
then we define the integral dimension of $R$, denoted by $i(R)$,  to be the least integer $k$  such that (1.1) holds for all $J$ and all $n\geq k$.  If  such $k$ does not exist, then we set $i(R)=\infty$.
\end{definition}

It is obvious that $i(R)=k< \infty$ if and only if $k$ is the least integer such that for every pair of ideals $J\subseteq I$, where $I$ is
integral over $J$, $$I^n\subseteq J^{n-k}$$ for $n\geq k$. Moreover, we have $Q^{k+1}=0$ for the nilradical  $Q$ of $R$ because it  is integral
over the zero ideal. Clearly, if $R$ is an artinian ring, then $i(R)\leq l(R)$, where $l(R)$ denotes the length of $R$.

It follows easily from  the Lipman-Sathaye theorem, $i(R)\leq d-1$ for every
regular ring of  dimension $d>0$. The above mentioned result of Huneke implies that if $R$ is a reduced excellent local ring, then $i(R)<\infty$.

In this paper, we will prove  some basic facts about the behavior of $i(R)$ under  several operations on $R$. The results show that
$i(R)$ behaves well under operations such as localization and completion.

We will give a   lower bound for $i(R)$ in terms of the dimension $d$ of $R$. Explicitly, we will show
 $i(R)\geq d-1$.  In particular, it follows from Lipman-Sathaye theorem, if $R$ is a regular ring of dimension $d>0$,  then $i(R)= d-1$.  We can prove  a  noetherian ring $R$ with $i(R)=0$  if and only if $R$ is a regular ring of dimension $d\leq 1$.

The main goal  of the paper is to show that  $i(R)<\infty$ for  a large class of noetherian  rings. Recall that a $S$-algebra $R$ is said to be essentially of finite type over a ring $S$, if $R$ is a localization of a finitely generated $S$-algebra. Our main  result  of the paper states as follows:

\vspace* {0.2cm} \noindent

{\bf Theorem 3.6}\ \ {\it  Let $R$ be a noetherian ring of finite  dimension. If $R$ satisfies one of the following conditions, then $i(R)<\infty$.

{\rm{(i)}}   $R$ is  essentially of finite type over a  local ring $S$.

{\rm{(ii)}} $R$ is a ring of characteristic $p$, and $R$ is module finite over $R^p$.

{\rm{(iii)}} $R$ is essentially of finite type over the ring  of integer numbers $\textbf{Z}$.
 }

\

In particular, one can conclude  from  Theorem 3.6,  $i(R)<\infty$ for any noetherian local ring $R$.  It implies that the notion of the integral dimension  becomes a well-defined notion and  has  concrete  meaning  in the class of local rings.  We do not know  whether  $i(R)$ is finite  for every noetherian ring $R$. It seems very difficult to give an answer to the question in general.  However, we can prove (see, Corollary 2.11)  that $\text{dim}(R)<\infty$ if   $i(R)<\infty$.  We  make the following conjecture.

\begin{conjecture} Let $R$ be a noetherian  ring of finite dimension. Then $i(R)<\infty.$

\end{conjecture}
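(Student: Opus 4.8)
The plan is to reduce the global assertion to the local case, which is already available: as remarked after Theorem 3.6, $i(S)<\infty$ for every noetherian local ring $S$ of finite dimension. The whole content of the conjecture is then to pass from finiteness of each local integral dimension to a single integer $k$ that works simultaneously throughout $\text{Spec}(R)$.

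First I would record a local-global principle for $i$. Since containment of ideals is a local property and, by a standard fact for noetherian rings, integral closure commutes with localization, one has $(\overline{J})_M=\overline{J_M}$ for every ideal $J$ and every maximal ideal $M$, hence $(\overline{J}^{\,n})_M=\overline{J_M}^{\,n}$ and $(J^{n-k})_M=(J_M)^{n-k}$. Therefore $\overline{J}^{\,n}\subseteq J^{n-k}$ holds in $R$ if and only if $\overline{J_M}^{\,n}\subseteq (J_M)^{n-k}$ holds in $R_M$ for every $M$; and since every ideal of $R_M$ is the localization of an ideal of $R$, this yields $$i(R)=\sup_{M\in\,\text{Max}(R)} i(R_M).$$ By the local case every term on the right is finite, so the conjecture is equivalent to the assertion that this supremum is finite. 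Observe that the lower bound $i(R_M)\ge \dim R_M-1$ is already uniformly controlled by $\dim R-1<\infty$; what must be controlled is the upper bound as $M$ varies.

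The remaining and decisive step is uniformity, and this is where I expect the real difficulty to lie. The natural attack is Noetherian induction on $\text{Spec}(R)$, passing to $R_{\text{red}}$ (the nilradical is globally nilpotent, so it should contribute only a bounded additive correction) so as to exploit normalization and the regular locus: on a suitable dense open subset one would like the ring to be regular, where the Lipman-Sathaye bound gives $i(R_M)\le \dim R_M-1\le \dim R-1$ uniformly, and then one descends to the closed complement, whose dimension is smaller, via the inductive hypothesis together with a uniform Artin-Rees estimate used to glue the local bounds. Each ingredient of this scheme, however, is exactly a property that excellence guarantees and that an arbitrary finite-dimensional noetherian ring may lack: the regular locus need not be open (failure of the J-2 property), the normalization need not be module finite (failure of the Nagata property), and, most seriously, a uniform Artin-Rees, equivalently a uniform Brian\c{c}on-Skoda, estimate valid across the whole spectrum need not exist. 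For the classes in Theorem 3.6 these uniformity properties are precisely what the hypotheses of excellence, finite type over an excellent base, or Frobenius finiteness supply. Producing such a uniform estimate for an arbitrary finite-dimensional noetherian ring---or, alternatively, bounding the variation of $i(R_M)$ over $\text{Max}(R)$ by a direct argument---is the essential obstacle, and is why the conjecture is expected to be hard.
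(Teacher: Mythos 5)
You were asked to prove Conjecture 1.2, which the paper itself leaves open: the author states explicitly that it is unknown whether $i(R)<\infty$ for every finite-dimensional noetherian ring, and no proof of this statement appears anywhere in the paper. Judged on its own terms your proposal is not a proof either, and you say so yourself. What you do establish is sound and matches the paper's own machinery: the local-global formula $i(R)=\sup\{\,i(R_M)\ :\ M \text{ a maximal ideal}\,\}$ is exactly Corollary 2.4 (proved there from Proposition 2.2, just as you argue), and the finiteness of each $i(R_M)$ is Corollary 3.7, obtained from Theorem 3.6(i) via completion and Huneke's uniform theorems. But these two facts do not combine into anything: $\mathrm{Max}(R)$ is in general infinite, a supremum of infinitely many finite integers need not be finite, and the entire content of the conjecture is the uniform bound on $i(R_M)$ as $M$ varies. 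That bound is the step your proposal does not supply.

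Your sketched route to uniformity (Noetherian induction on the spectrum, passage to $R_{\mathrm{red}}$ with a bounded correction from the nilradical, generic regularity plus the Lipman-Sathaye bound on a dense open set, and a uniform Artin-Rees estimate to descend to the closed complement) is the natural one, and it is essentially the skeleton of Huneke's method that the paper relies on for Theorem 3.6. But, as you correctly observe, each ingredient --- openness of the regular locus (J-2), module-finite normalization (Nagata), and a spectrum-wide uniform Artin-Rees or Brian\c{c}on-Skoda estimate --- can fail for an arbitrary finite-dimensional noetherian ring; these possible failures are precisely why Theorem 3.6 carries its hypotheses (essentially of finite type over a local ring, $F$-finiteness, or essentially of finite type over $\textbf{Z}$), and why Proposition 3.1(ii) demands the uniform Artin-Rees property as an input rather than deriving it. So the decisive step is missing, is acknowledged by you to be missing, and cannot be filled in from anything in the paper. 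The accurate assessment is that your text is a correct account of why the conjecture is open and of how far the paper's own tools reach, not a proof of the conjecture.
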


Huneke [Hu] observed that the uniform  Brian\c{c}on-Skoda property plays an  important role in the study of  the  uniform Artin-Rees property of a noetherian ring.  As  pointed out  in Theorem 3.2, the finiteness property  of integral dimension  is also  useful in proving a ring with such uniform  Artin-Rees property.  The proof of our main result Theorem 3.6 depends heavily  on the  uniform Artin-Rees theorem of Huneke [Hu].
The main technique  of Huneke came from the paper of Lipman and Sathaye [LS] in characteristic 0 and the theory of tight closure of [HH1] [HH2] in characteristic $p$. It can not be applied directly to proving a result more general than Theorem 3.6.

Even for the local case, it would be very interesting to reveal  more about the relationship between $i(R)$ and  $i(R/aR)$ for an element $a$ in  $R$ with $\text{dim}(R/aR)=\text{dim}(R)-1$. We guess that  they may be related to each other by something like  the multiplicities, $e(R)$ and $e(R/(a))$ ,  of the maximal ideals of $R$ and $R/(a)$. The answer is not known even for   $i(R)$ and $i(R[X])$,   where $R[X]$ is the polynomial ring of $R$ in the variable $X$. Hence  there are a lot of questions about integral dimension remained for further studying.

 \vspace* {0.3cm}

\section{Basic properties}

\ \ \ \ \ \   In this section,  we will recall some basic facts about the integral closure of an ideal, and then presents some properties of the integral dimension $i(R)$. Some of the results  are known, one can find in [cf. HS], we will prove them for  convenience of readers.

 Let $Q$ be the  nilradical of $R$ and  $R_{red}$ be the ring of $R$ modulo $Q$. Since there is an integer $n$ such that $Q^n=0$, it is easy to see that $Q$ is contained in $\overline J$ for every ideal $J$ of $R$, and an element $x$ is integral over $J$ if and only if $\bar x$, the image of $x$ in $R_{red}$,  is integral over $JR_{red}$. Thus we have:

\begin{proposition} Let $J$ be an ideal of $R$. Then  $\overline {JR_{red}} = \overline {J}R_{red}$.

\end{proposition}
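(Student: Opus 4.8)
The plan is to establish the claimed equality of ideals in $R_{red}$ by proving the two inclusions separately, in each case reducing to the equivalence recorded just before the statement: an element $x\in R$ is integral over $J$ if and only if its image $\bar x$ in $R_{red}$ is integral over $JR_{red}$.

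For the inclusion $\overline{J}R_{red}\subseteq\overline{JR_{red}}$, I would take an element $y\in\overline{J}$ and write an integral equation $y^n+a_1y^{n-1}+\cdots+a_n=0$ with $a_i\in J^i$. Reducing this relation modulo $Q$ gives $\bar y^{\,n}+\bar a_1\bar y^{\,n-1}+\cdots+\bar a_n=0$ in $R_{red}$, and since $\bar a_i\in J^iR_{red}=(JR_{red})^i$, the image $\bar y$ is integral over $JR_{red}$. As $\overline{J}R_{red}$ is generated by such images $\bar y$, this inclusion follows. This direction is immediate and uses nothing about the nilpotency of $Q$.

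For the reverse inclusion $\overline{JR_{red}}\subseteq\overline{J}R_{red}$, I would take an arbitrary element of $\overline{JR_{red}}$ and write it as $\bar z$ for some $z\in R$. By definition $\bar z$ is integral over $JR_{red}$, so the cited equivalence yields that $z$ is integral over $J$, i.e. $z\in\overline{J}$; hence $\bar z\in\overline{J}R_{red}$. Combined with the first inclusion this gives the desired equality.

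The two inclusions are short once the equivalence is in hand, so the real content lies entirely in that equivalence, whose nontrivial (backward) direction I am assuming from the discussion preceding the proposition. The one point demanding care there is the lifting argument: an integral equation for $\bar z$ over $JR_{red}$ lifts to a relation $z^n+b_1z^{n-1}+\cdots+b_n=q$ in $R$ with $b_i\in J^i$ and $q\in Q$, and one must absorb the nilpotent error term $q$ to produce a genuine integral equation for $z$ over $J$. This is exactly where the nilpotency of $Q$ and the inclusion $Q\subseteq\overline{J}$ enter. Since the excerpt records this equivalence as already established, I would simply invoke it rather than re-derive it.
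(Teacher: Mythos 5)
Your proof is correct and takes essentially the same approach as the paper: the paper offers no separate proof of Proposition 2.1, deriving it directly (``Thus we have:'') from the equivalence stated just before it, that $x$ is integral over $J$ if and only if $\bar x$ is integral over $JR_{red}$, which is exactly what you do, just with the two inclusions spelled out. Your remark about lifting the integral equation and absorbing the nilpotent error term $q\in Q$ (by raising the relation to a power killing $q$) is precisely the content hidden in the paper's ``it is easy to see.''
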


Integral dependence  behaves well under localization:

\begin{proposition} Let $J$ be an ideal of $R$. Then for any multiplicatively
closed subset $T$ of $R$, $T^{-1} \overline J = \overline {T^{-1}J}.$

\end{proposition}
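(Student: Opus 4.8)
The plan is to establish the two inclusions $T^{-1}\overline J \subseteq \overline{T^{-1}J}$ and $\overline{T^{-1}J} \subseteq T^{-1}\overline J$ separately, using throughout the elementary identity $(T^{-1}J)^i = T^{-1}(J^i)$ and the fact that every element of $T^{-1}R$ can be written as $r/t$ with $r \in R$ and $t \in T$.

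For the first inclusion I would start with a generator $x/t$ of $T^{-1}\overline J$, where $x \in \overline J$ and $t \in T$. By definition $x$ satisfies an equation $x^n + a_1 x^{n-1} + \cdots + a_n = 0$ with $a_i \in J^i$. Dividing this relation by $t^n$ inside $T^{-1}R$ yields
$$\left(\frac{x}{t}\right)^n + \frac{a_1}{t}\left(\frac{x}{t}\right)^{n-1} + \cdots + \frac{a_n}{t^n} = 0,$$
and since $a_i/t^i \in T^{-1}(J^i) = (T^{-1}J)^i$, this is an integral equation of $x/t$ over $T^{-1}J$. Hence $x/t \in \overline{T^{-1}J}$, which is routine.

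The substantive direction is $\overline{T^{-1}J} \subseteq T^{-1}\overline J$, and this is where I expect the main difficulty. Given $z \in \overline{T^{-1}J}$, write $z = r/t$ and fix an integral equation $z^n + b_1 z^{n-1} + \cdots + b_n = 0$ with $b_i \in (T^{-1}J)^i = T^{-1}(J^i)$. After choosing a common denominator $w \in T$ I can write each $b_i = c_i/w$ with $c_i \in J^i$. Substituting $z = r/t$ and clearing the denominators $w$ and $t^n$ gives a relation $w r^n + \sum_{i=1}^n c_i t^i r^{n-i} = 0$ holding in $T^{-1}R$; because this is an equality in a localization, it becomes a genuine identity in $R$ only after multiplying by a further element $u \in T$ that annihilates the difference.

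The crux is then to convert the resulting $R$-relation $u w r^n + \sum_{i=1}^n u c_i t^i r^{n-i} = 0$ into a monic integral equation over $J$. I would multiply through by $(uw)^{n-1}$ and absorb factors so that the leading term becomes $(uwr)^n$ and the $i$-th coefficient becomes $d_i := (uw)^{i-1} u\, t^i c_i \in J^i$; this produces $(uwr)^n + \sum_{i=1}^n d_i (uwr)^{n-i} = 0$, showing $uwr \in \overline J$. Finally, since $z = r/t = (uwr)/(uwt)$ with $uwt \in T$, we conclude $z \in T^{-1}\overline J$. The only delicate points are the bookkeeping with the common denominator $w$ and the annihilator $u$, and verifying that the homogenizing power $(uw)^{n-1}$ is chosen so that every coefficient genuinely lands in the correct power $J^i$; these become routine once the homogenization device is in place.
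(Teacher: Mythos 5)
Your proposal is correct and follows essentially the same route as the paper's proof: both handle the easy inclusion by dividing the integral equation by $t^n$, and both handle the hard inclusion by clearing denominators, multiplying by an element of $T$ (your $u$, the paper's $b'$) to turn the localized relation into a genuine identity in $R$, and then homogenizing with a suitable power so that a $T$-multiple of the numerator satisfies a monic equation over $J$. Your bookkeeping with $d_i = (uw)^{i-1}u\,t^i c_i \in J^i$ checks out, so the argument is complete.
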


\begin{proof} Clearly, $T^{-1} \overline J \subseteq  \overline {T^{-1}J}$ by the definition of integral dependence. Let $a\in \overline {T^{-1}J}$. By the definition of integral dependence again, there is an equation of the form
$$ a^n+ a_1a^{n-1}+\cdots+ a_n=0$$
in the ring $T^{-1}R$, where $a_i\in T^{-1}J^i$ for $1\leq i\leq n$. Choose an element $b\in T$ such that $ba\in R$ and $ba_i\in J^i$ for $1\leq i\leq n$. Hence we have
$$ {(ba)}^n+ a_1b{(ba)}^{n-1}+\cdots+ b^na_n=0$$
is an equation with coefficients in $R$, and there is an element $b'\in T$ such that
 $$b'({(ba)}^n+ a_1b{(ba)}^{n-1}+\cdots+ b^na_n)=0$$
 in $R$. It shows that
$$ {(b'ba)}^n+ a_1b'b{(b'ba)}^{n-1}+\cdots+ {(b'b)}^na_n=0$$
is the desired equation which implies $b'ba$ is integral over $J$. So $a\in T^{-1} \overline J $, and $\overline {T^{-1}J}\subseteq T^{-1} \overline J $. This proves the proposition.

\end{proof}

\begin{corollary}
Let $T$ be any multiplicatively closed subset of $R$. Then $ i(T^{-1}R)\leq i(R)$.
\end{corollary}

\begin{proof} If $i(R)=\infty$, there is nothing to prove. Now, assume that $i(R)=k$ is finite. For any ideal $K$ of $T^{-1}R$, there exists an ideal $J$ of $R$ such that $K=T^{-1}J$. By Proposition 2.2, $\overline K = T^{-1}{\overline J}$. Note that ${\overline J}^n\subseteq J^{n-k}$ for $n\geq k$. Thus,  for $n\geq k$,  we have
$${\overline K}^n={(T^{-1}{\overline J})}^n = T^{-1}{\overline {J}}^n\subseteq {T^{-1}{{ J}}^{n-k}}={(T^{- 1}J)}^{n-k}=K^{n-k}$$
and it follows that  $i(T^{-1}R)\leq k$.

\end{proof}

 It is well-known the  dimension of $R$ can be computed as follows:
 $$\text{dim}(R)= sup \{\ \text{dim}(R_\texttt{m})\ |\ \texttt{m} \text { a maximal ideal of R}\},$$
  where $R_\texttt{m}$ denotes the localization of $R$ at the maximal ideal $\texttt{m}$. Similarly, we have the following computation for integral dimension.

\begin{corollary} For a noetherian ring $R$, $i(R) = sup \{\ i(R_\texttt{m})\ |\ \texttt{m}\text { a maximal ideal of R}\}.$
\end{corollary}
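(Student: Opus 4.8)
The plan is to prove the two inequalities $\sup_\texttt{m} i(R_\texttt{m}) \leq i(R)$ and $i(R) \leq \sup_\texttt{m} i(R_\texttt{m})$ separately, writing $s = \sup\{\, i(R_\texttt{m}) \mid \texttt{m} \text{ a maximal ideal}\,\}$. The first inequality is immediate: applying Corollary 2.3 with $T = R \setminus \texttt{m}$ gives $i(R_\texttt{m}) \leq i(R)$ for every maximal ideal $\texttt{m}$, and taking the supremum yields $s \leq i(R)$. In particular, if $s = \infty$ then $i(R) = \infty = s$ and we are done; so for the reverse inequality I may assume $s < \infty$, and then each $i(R_\texttt{m})$ is finite.

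For the reverse inequality I would show that the integer $s$ itself satisfies condition (1.1), which forces $i(R) \leq s$ by the minimality in the definition. The key tool is that containment of ideals in a noetherian ring is a local property: for ideals $A, B$ of $R$ one has $A \subseteq B$ if and only if $A_\texttt{m} \subseteq B_\texttt{m}$ for every maximal ideal $\texttt{m}$. So I fix an arbitrary ideal $J$ of $R$ and an integer $n \geq s$; to prove $\overline{J}^n \subseteq J^{n-s}$ it suffices to check the containment after localizing at each maximal ideal $\texttt{m}$.

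At such an $\texttt{m}$, since localization commutes with forming products of ideals and with integral closure (Proposition 2.2), I would identify $(\overline{J}^n)_\texttt{m}$ with $(\overline{J_\texttt{m}})^n$ and $(J^{n-s})_\texttt{m}$ with $(J_\texttt{m})^{n-s}$. Writing $k = i(R_\texttt{m}) \leq s$, the definition of integral dimension applied in the local ring $R_\texttt{m}$ gives $(\overline{J_\texttt{m}})^n \subseteq (J_\texttt{m})^{n-k}$, which is legitimate because $n \geq s \geq k$; and since $n - k \geq n - s$ we have $(J_\texttt{m})^{n-k} \subseteq (J_\texttt{m})^{n-s}$. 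Chaining these inclusions yields the required local containment, and because it holds at every $\texttt{m}$, the globalization principle gives $\overline{J}^n \subseteq J^{n-s}$. As $J$ and $n \geq s$ were arbitrary, $s$ satisfies (1.1), whence $i(R) \leq s$.

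The argument is essentially bookkeeping once the right tools are aligned, so I do not expect a deep obstacle; the points demanding care are the index shift and the uniformity. Concretely, I must verify cleanly that the single bound $s$ works even though each local ring may have a strictly smaller integral dimension $i(R_\texttt{m})$, i.e.\ that $n - i(R_\texttt{m}) \geq n - s$ delivers $(J_\texttt{m})^{n - i(R_\texttt{m})} \subseteq (J_\texttt{m})^{n-s}$ for all $\texttt{m}$ simultaneously. I should also record the exponent edge case $n = s$ (where $J^{0} = R$ makes the containment trivial) and confirm that Proposition 2.2 is being invoked with $T = R \setminus \texttt{m}$.
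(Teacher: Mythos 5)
Your proof is correct and follows essentially the same route as the paper: the inequality $\sup_\texttt{m} i(R_\texttt{m}) \leq i(R)$ from Corollary 2.3, and the reverse inequality by localizing at each maximal ideal, identifying $(\overline{J})_\texttt{m}$ with $\overline{J_\texttt{m}}$ via Proposition 2.2, and globalizing the local containments. Your explicit treatment of the index shift $n - i(R_\texttt{m}) \geq n - s$ is a point the paper passes over silently, but it is the same argument.
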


\begin{proof} Let us  set
$$t= sup \{\ i(R_\texttt{m}) \ |\ \texttt{m}{\text { a maximal ideal of }}R\}.$$

By Corollary 2.3, we have $i(R)\geq t.$ On the other hand, if $t=\infty$, then
$i(R)=\infty$  and thus $i(R)=t$ holds in this case.

 Now, assume that $t< \infty$. For any ideal $I$ of $R$,  we have $i(R_\texttt{m})\leq t$
and
$$(\overline{IR_\texttt{m}})^n \subseteq (IR_\texttt{m})^{n-t}$$
for all for $n\geq t $ and for any maximal ideal $\texttt{m}$ of $R$. By Proposition 2.2, $\overline{IR_\texttt{m}}=\overline{I}R_\texttt{m}$. It implies
$${\overline{I}}^nR_\texttt{m} \subseteq I^{n-t}R_\texttt{m}$$
for all maximal ideals $\texttt{m}$. Hence  ${\overline{I}}^n \subseteq I^{n-t}$ for $n\geq t,$  and consequently $i(R)\leq t$. This proves the corollary.
\end{proof}

Now, we recall  a useful tool, the reduction of an ideal,  for  studying the integral closure of an ideal. An ideal $J\subseteq I$ is said to be a reduction of the ideal $ I$,  if there exists an integer $k$ such that $I^{k+l}=JI^k$.  One important result about  reduction  is the following proposition.

\begin{proposition} Let $R$ be a noetherian ring and  $J\subseteq I$  a pair of ideals of $R$. Then the following conditions are equivalent.

{\rm{(i)}} $J$ is a reduction of $I$.

{\rm{(ii)}} $I$ is integral over $J$.

\end{proposition}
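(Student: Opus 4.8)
The plan is to prove the two implications separately, using the determinant trick for (i) $\Rightarrow$ (ii) and the Rees algebra for (ii) $\Rightarrow$ (i).

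\textbf{For (i) $\Rightarrow$ (ii):} Assume $J$ is a reduction of $I$, so that $I^{k+1}=JI^k$ for some $k$. Fix $x\in I$; I want to produce an integral equation for $x$ over $J$. I set $M=I^k$, a finitely generated $R$-module, and observe that $xM\subseteq I^{k+1}=JM$. Writing $x m_i=\sum_j a_{ij}m_j$ with $a_{ij}\in J$ for a generating set $m_1,\dots,m_r$ of $M$, and applying the adjugate (Cayley--Hamilton) to the matrix $xI_r-(a_{ij})$, I obtain $d\,M=0$, where $d=x^r+c_1x^{r-1}+\cdots+c_r$ and each $c_j$ is, up to sign, a sum of $j\times j$ minors of $(a_{ij})$, hence $c_j\in J^j$. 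The element $d$ need not vanish, but since $x^k\in I^k=M$ I have $dx^k=0$; multiplying out gives $x^{k+r}+c_1x^{k+r-1}+\cdots+c_rx^k=0$. Reading off the coefficients, the coefficient of $x^{(k+r)-j}$ lies in $J^j$ for $1\le j\le r$ and equals $0\in J^j$ for $r<j\le k+r$, so this is a genuine integral equation for $x$ over $J$. As $x\in I$ was arbitrary, $I\subseteq\overline{J}$.

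\textbf{For (ii) $\Rightarrow$ (i):} Assume $I$ is integral over $J$. I pass to the Rees algebras $\mathcal{S}=R[Jt]=\bigoplus_{n\ge0}J^nt^n$ and $\mathcal{R}=R[It]=\bigoplus_{n\ge0}I^nt^n$ inside $R[t]$. The key point is that $\mathcal{R}$ is integral over $\mathcal{S}$: given $x\in I$ with $x^n+a_1x^{n-1}+\cdots+a_n=0$ and $a_i\in J^i$, multiplying by $t^n$ yields $(xt)^n+(a_1t)(xt)^{n-1}+\cdots+(a_nt^n)=0$ with every $a_it^i\in J^it^i\subseteq\mathcal{S}$, so $xt$ is integral over $\mathcal{S}$. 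Choosing generators $x_1,\dots,x_m$ of $I$, the algebra $\mathcal{R}$ is generated over $\mathcal{S}$ by the integral elements $x_1t,\dots,x_mt$; being a finitely generated $\mathcal{S}$-algebra that is integral over $\mathcal{S}$, $\mathcal{R}$ is a finite $\mathcal{S}$-module. Since $\mathcal{S}$ is generated in degree one over its degree-zero part $R$ and $\mathcal{R}$ is a finitely generated graded module, its homogeneous generators occupy bounded degrees $\le k$, which forces $\mathcal{R}_{n+1}=\mathcal{S}_1\mathcal{R}_n$ for all $n\ge k$. In degree $n+1$ this reads $I^{n+1}t^{n+1}=(Jt)(I^nt^n)$, that is $I^{n+1}=JI^n$ for $n\ge k$, exhibiting $J$ as a reduction of $I$.

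The delicate step is (ii) $\Rightarrow$ (i): one must invoke that an $\mathcal{S}$-algebra which is simultaneously finitely generated and integral is a finite module, and then run the graded finite-generation argument (a homogeneous Nakayama-type reduction) to descend to the clean equality $I^{n+1}=JI^n$. Both facts rely on $R$, and hence $\mathcal{S}$, being noetherian. In the reverse direction the only subtlety is that the Cayley--Hamilton element $d$ annihilates $M$ rather than vanishing outright, which is resolved by the multiplication by $x^k$ used above; everything else there is the standard determinant trick.
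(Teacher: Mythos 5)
Your proof is correct, but your treatment of (ii) $\Rightarrow$ (i) takes a genuinely different route from the paper's. For (i) $\Rightarrow$ (ii) you and the paper do the same thing: the determinant trick applied to the finitely generated module $M=I^k$, with the problem that $\det$ only annihilates $M$ resolved by multiplying against a power of $x$ lying in $M$ (the paper multiplies $\det(A)$ by $a^k$ in exactly the same way). For (ii) $\Rightarrow$ (i), however, the paper argues elementarily: from the integral equation of each generator $b_i$ of $I$ (of degree $n_i$) it extracts $b_i^{n_i}\in JI^{n_i-1}$, and then a pigeonhole count shows that every monomial $b_1^{r_1}\cdots b_s^{r_s}$ of total degree $k=\sum n_i+1$ must involve some $b_i$ to a power at least $n_i$, whence $I^k=JI^{k-1}$. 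You instead pass to the Rees algebras $\mathcal{S}=R[Jt]\subseteq\mathcal{R}=R[It]$, show $\mathcal{R}$ is a finitely generated and integral $\mathcal{S}$-algebra, hence a finite $\mathcal{S}$-module, and read off $I^{n+1}=JI^n$ for all $n\geq k$ from the bounded degrees of homogeneous module generators; each step of this is sound, including the graded reduction $\mathcal{R}_{n+1}=\mathcal{S}_1\mathcal{R}_n$. The trade-off: the paper's argument is self-contained and produces an explicit reduction exponent $k=\sum n_i+1$ in terms of the degrees of the integral equations, while yours is the more structural one --- it identifies ``reduction'' with module-finiteness of the Rees algebra extension, which is the form in which the statement generalizes (this is essentially the Huneke--Swanson presentation). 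One small correction to your closing remark: the fact that a finitely generated algebra that is integral is a finite module needs no noetherian hypothesis; noetherianness is needed only to guarantee that $I$ is finitely generated (and, in the other implication, that $I^k$ is).
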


\begin{proof}(i) $\Rightarrow$ (ii) By assumption, there is an integer $k$ such that $I^{k+l}=JI^k$.  As $R$ is a noetherian ring, the ideal $I^k$ is finitely generated. Set $I^k=(a_1, a_2, \cdots, a_n)$. For $a\in I$ and $i (1\leq i\leq n )$, write $a a_i=\sum_{j=1}^{n}a_{ij}a_j$ for some $a_{ij}\in J$. Let $ A=(a_{ij})$ be the matrix $(\delta_{ij}a - a_{ij})$, where $\delta_{ij}$ is the Kronecker delta function. By Cramer's Rule, $\text{det}(A)I^k=0$. In particular, $\text{det}(A)a^k=0$,  and an expansion of
$\text{det}(A)a^k=0$ yields the desired equation of integral dependence of $a$ over $J$. Hence $I$ is integral over $J$.

(ii) $\Rightarrow$ (i) Suppose that $I$ is integral over $J$. Write $I=(b_1, b_2,\cdots, b_s)$. For each $i, 1\leq i\leq s$, there is an equation of the form
 $$b_i^{n_i}+c_{i1}b_i^{n_i-1}+\cdots+c_{in_i}=0$$
where $ c_{ij}\in I^j$ for $1\leq j\leq n_i$. Thus it implies $b_i^{n_i}\in JI^{n_i-1}$. Put $k=\sum_{j=1}^s n_j +1$. It is clear that if $r_1+r_2+\cdots+r_s=k$, every element of the
form $b_1^{r_1}b_2^{r_2 }\cdots b_s^{r_s}$ lies in $JI^{k-1}$, and this proves that $I^k=JI^{k-1}$.
\end{proof}

As an easy consequence of the proposition, we have the following result mentioned in the last section.

\begin{corollary} Let $J$ be an ideal of $R$. Then for any integer $n$, $J^n\subseteq {\overline J}^n\subseteq \overline{J^n}.$

\end{corollary}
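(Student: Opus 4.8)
The plan is to treat the two inclusions separately, with the left one being immediate and the right one resting on the reduction machinery of Proposition 2.5. For the left-hand inclusion $J^n\subseteq\overline{J}^n$, I would first observe that $J\subseteq\overline{J}$: any $x\in J$ satisfies the monic equation $x+(-x)=0$ with constant term $-x\in J^1$, so $x$ is integral over $J$. Raising this containment to the $n$-th power of ideals (which preserves inclusion) then yields $J^n\subseteq\overline{J}^n$ at once. The case $n=0$ is trivial, so I assume $n\geq 1$ throughout.

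For the right-hand inclusion $\overline{J}^n\subseteq\overline{J^n}$, the idea is to show that $J^n$ is a reduction of $\overline{J}^n$ and then to invoke the implication (i) $\Rightarrow$ (ii) of Proposition 2.5. Since every element of $\overline{J}$ is by definition integral over $J$, the ideal $\overline{J}$ is integral over $J$, so Proposition 2.5 in the direction (ii) $\Rightarrow$ (i) tells us that $J$ is a reduction of $\overline{J}$. Hence there is an integer $k$ with $\overline{J}^{m+1}=J\overline{J}^m$ for all $m\geq k$, and iterating this relation gives $\overline{J}^{k+l}=J^l\overline{J}^k$ for every $l\geq 0$.

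With this in hand I would choose $t$ large enough that $nt\geq k$ (for instance $t=k$, since $n\geq 1$), so that every exponent in the chain $nt,\,nt+1,\dots,nt+n-1$ is at least $k$. Applying the reduction relation $n$ successive times then yields
$$(\overline{J}^n)^{t+1}=\overline{J}^{n(t+1)}=\overline{J}^{nt+n}=J^n\overline{J}^{nt}=J^n(\overline{J}^n)^t,$$
which is exactly the statement that $J^n$ is a reduction of $\overline{J}^n$. Proposition 2.5 now gives that $\overline{J}^n$ is integral over $J^n$, that is, $\overline{J}^n\subseteq\overline{J^n}$, completing the proof.

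I expect the only point requiring genuine care to be the bookkeeping in the third step: one must check that the reduction relation $\overline{J}^{m+1}=J\overline{J}^m$ may legitimately be applied at each of the $n$ intermediate exponents, which is guaranteed precisely by the choice $nt\geq k$ (the smallest exponent entering the chain being $nt$). Everything else is a routine consequence of Proposition 2.5 used in both directions.
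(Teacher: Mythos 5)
Your proof is correct and follows essentially the same route as the paper: both deduce from Proposition 2.5 that $J$ is a reduction of $\overline{J}$, iterate the reduction relation to show $J^n$ is a reduction of $\overline{J}^n$, and then apply Proposition 2.5 in the reverse direction to conclude $\overline{J}^n\subseteq\overline{J^n}$. Your only deviation is the slightly more careful bookkeeping (choosing $t$ with $nt\geq k$ rather than using $k$ itself), which the paper's terser computation $(\overline{J}^n)^{k+1}=J^n(\overline{J}^n)^k$ implicitly subsumes.
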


\begin{proof}  It follows from Proposition 2.5, $J$ is a reduction of ${\overline J}$. Thus there is an integer $k$ such that ${\overline J}^{k+1}=J {\overline J}^k$. For an integer $n$, we have ${({\overline J}^n)}^{k+1}=J^n{({\overline J}^n)}^k.$ By Proposition 2.5 again, ${\overline J}^n$ is integral over $J^n$, and consequently $$J^n\subseteq {\overline J}^n\subseteq \overline{J^n}.$$

\end{proof}

Let $\varphi: R\rightarrow S$ be a morphism of noetherian rings. The property of persistence of integral closure states that $ \varphi(\overline J)\subseteq \overline {\varphi(J)}$ for any ideal $J$ of $R$. This follows as by applying $\varphi$ to an equation of integral dependence of an element $a$ over $J$ to obtain an equation of integral dependence of $\varphi(a)$ over the ideal $\varphi(J)S$.

\begin{proposition} Let $\varphi: R\rightarrow S$ be a faithfully flat  morphism of noetherian rings. Then $i(R)\leq i(S).$

\end{proposition}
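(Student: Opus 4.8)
The plan is to show that any uniform bound witnessing $i(S)=k<\infty$ descends along $\varphi$ to the same bound for $R$; if $i(S)=\infty$ there is nothing to prove, so assume $i(S)=k$. Fix an arbitrary ideal $J$ of $R$ and an integer $n\ge k$; the goal is to deduce $\overline J^{\,n}\subseteq J^{\,n-k}$. The idea is to transport the question into $S$, apply the hypothesis on $S$ there, and then pull the resulting inclusion back to $R$ using faithful flatness.

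First I would invoke persistence of integral closure, recorded just above the statement: applying it to $J$ gives $\varphi(\overline J)\subseteq\overline{\varphi(J)}$, hence for the extended ideals $\overline J\,S\subseteq\overline{JS}$. Raising to the $n$-th power and using that extension commutes with ideal powers yields $\overline J^{\,n}S=(\overline J\,S)^n\subseteq(\overline{JS})^n$. Now $JS$ is an ideal of $S$, so the assumption $i(S)=k$ applied to $JS$ gives $(\overline{JS})^n\subseteq(JS)^{n-k}=J^{\,n-k}S$ for $n\ge k$. Concatenating these inclusions produces $\overline J^{\,n}S\subseteq J^{\,n-k}S$ inside $S$.

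The final step is to descend this inclusion of extended ideals back to $R$. Here is where faithful flatness is essential: for faithfully flat $\varphi$ and any ideal $I$ of $R$ the natural map $R/I\to S/IS$ is injective, so the contraction $\varphi^{-1}(IS)$ equals $I$; consequently $I_1S\subseteq I_2S$ forces $I_1\subseteq I_2$. Applying this with $I_1=\overline J^{\,n}$ and $I_2=J^{\,n-k}$ turns $\overline J^{\,n}S\subseteq J^{\,n-k}S$ into $\overline J^{\,n}\subseteq J^{\,n-k}$. Since $J$ and $n\ge k$ were arbitrary, this gives $i(R)\le k=i(S)$.

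I expect the only genuine subtlety to be the descent step, namely verifying that faithful flatness really does let one pull the containment $\overline J^{\,n}S\subseteq J^{\,n-k}S$ back to $R$. This is standard (it is exactly the fact that faithfully flat maps reflect inclusions of ideals, equivalently that extension followed by contraction is the identity), but it is the one place where the hypothesis is used in full strength; mere flatness would not suffice, since without faithfulness the contraction $\varphi^{-1}(IS)$ could be strictly larger than $I$. The remaining steps, persistence and the compatibility of extension with powers, are purely formal.
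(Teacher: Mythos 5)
Your proof is correct and follows essentially the same route as the paper's: persistence of integral closure to get $\overline{J}\,S\subseteq\overline{JS}$, the hypothesis $i(S)=k$ applied to the extended ideal $JS$, and descent of the resulting inclusion via faithful flatness. The only difference is that you spell out the descent step (contraction of extension is the identity, so extension reflects inclusions of ideals), which the paper leaves implicit; this is a welcome clarification, not a different argument.
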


\begin{proof} If $i(S)=\infty$, there is nothing to prove. Now, we assume $i(S)=k$ is finite. Let $J$ be an ideal of $R$.  Since $i(S)=k$, it shows
 ${\overline {\varphi(J)}}^n \subseteq {\varphi(J)}^{n-k}$ for $n\geq k$. By persistence of integral closure, we have $\varphi(\overline J)\subseteq \overline {\varphi(J)}$. So ${ {{\varphi(\overline {J})}}}^n \subseteq {\varphi(J)}^{n-k}.$ It implies ${ {{\varphi{(\overline {J}}^n)}}} \subseteq {\varphi(J^{n-k})}.$ Note that $\varphi$ is a faithfully flat  morphism of noetherian rings,  we conclude ${\overline J}^n\subseteq J^{n-k}$ for $n\geq k$, and this proves $i(R)\leq i(S)$.

\end{proof}

If $R$ is a local ring with the unique maximal ideal $\texttt{m}$, we use $\hat{R}$ to denote the $\texttt{m}$-adic completion of $R$.  Since $\hat R$ is faithfully flat over $R$,  $i(R)\leq i(\hat R)$ by Proposition 2.7. We prove that this inequality is indeed an equality.

\begin{proposition} Let $(R, \texttt{m})$ be a noetherian local ring. Then  $i(R)= i(\hat R).$

\end{proposition}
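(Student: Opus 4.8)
The plan is to pair Proposition 2.7 with a matching upper bound. Proposition 2.7 already gives $i(R)\le i(\hat R)$ from faithful flatness, so the entire content is the reverse inequality $i(\hat R)\le i(R)$. If $i(R)=\infty$ there is nothing to prove, so I would assume $i(R)=k<\infty$ and show that $\overline K^n\subseteq K^{n-k}$ for every ideal $K$ of $\hat R$ and every $n\ge k$. The first move is to reduce to $\texttt{m}\hat R$-primary ideals: for a general $K$ and any $t$, the ideal $K+\texttt{m}^t\hat R$ is $\texttt{m}\hat R$-primary and contains $K$, so $\overline K\subseteq\overline{K+\texttt{m}^t\hat R}$. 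Granting the primary case, $\overline{K+\texttt{m}^t\hat R}^{\,n}\subseteq(K+\texttt{m}^t\hat R)^{n-k}\subseteq K^{n-k}+\texttt{m}^t\hat R$, the last step by expanding the power. Intersecting over all $t$ and using that every ideal of the noetherian local ring $\hat R$ is closed in the $\texttt{m}\hat R$-adic topology (Krull intersection) then yields $\overline K^n\subseteq K^{n-k}$.

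Next I would note that every $\texttt{m}\hat R$-primary ideal is extended from $R$. If $\texttt{m}^{M_0}\hat R\subseteq K$, then since $\hat R/\texttt{m}^{M_0}\hat R=R/\texttt{m}^{M_0}$, the ideals of $\hat R$ containing $\texttt{m}^{M_0}\hat R$ correspond bijectively to ideals of $R$ containing $\texttt{m}^{M_0}$; hence $K=J\hat R$ for the $\texttt{m}$-primary ideal $J=K\cap R$. The crux is then the identity $\overline{J\hat R}=\overline J\,\hat R$ for $\texttt{m}$-primary $J$. Once it is in hand, $\overline K^n=\overline{J\hat R}^{\,n}=\overline J^{\,n}\hat R\subseteq J^{n-k}\hat R=K^{n-k}$, where the inclusion is simply the hypothesis $i(R)=k$ applied to the $R$-ideal $J$, and the proof is complete.

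For the key identity, persistence of integral closure gives $\overline J\,\hat R\subseteq\overline{J\hat R}$, so only the reverse inclusion needs work, and here I would argue by approximation. Fix $a\in\overline{J\hat R}$ with an equation $a^d+c_1a^{d-1}+\cdots+c_d=0$ where $c_i\in J^i\hat R$. Choose $M$ so large that $\texttt{m}^M\subseteq J^d$, then pick $\alpha\in R$ with $a-\alpha\in\texttt{m}^M\hat R$ and, using $\hat R/\texttt{m}^M\hat R=R/\texttt{m}^M$, pick $\gamma_i\in J^i$ with $c_i-\gamma_i\in\texttt{m}^M\hat R$. Substituting turns the equation into $\alpha^d+\gamma_1\alpha^{d-1}+\cdots+\gamma_d=\eta$ with $\eta\in R\cap\texttt{m}^M\hat R=\texttt{m}^M\subseteq J^d$; absorbing $\eta$ into the constant term exhibits $\alpha$ as integral over $J$, so $\alpha\in\overline J$ and $a\in\overline J\,\hat R+\texttt{m}^M\hat R$. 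Letting $M\to\infty$ and using that $\overline J\,\hat R$ is $\texttt{m}\hat R$-primary, hence closed, gives $a\in\overline J\,\hat R$.

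I expect the main obstacle to be precisely this key identity, and it is worth stressing why it is delicate: the unrestricted statement $\overline{J\hat R}=\overline J\,\hat R$ is false, since taking $J=0$ in an analytically ramified local domain makes $\overline{0\cdot\hat R}$ the nonzero nilradical of $\hat R$ while $\overline 0\,\hat R=0$. This is exactly why the reduction to $\texttt{m}\hat R$-primary ideals is indispensable — primariness is what lets me force the approximation error $\eta$ into $J^d$ and invoke closedness of $\overline J\,\hat R$, and it is the one point where the argument uses something beyond the formal properties of faithfully flat maps.
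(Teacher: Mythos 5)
Your proof is correct, and its outer skeleton coincides with the paper's: both arguments reduce to $\texttt{m}\hat R$-primary ideals by the same Krull-intersection device ($\overline K{}^n\subseteq(K+\texttt{m}^t\hat R)^{n-k}\subseteq K^{n-k}+\texttt{m}^t\hat R$ for all $t$, then intersect), and both use that $\texttt{m}\hat R$-primary ideals of $\hat R$ are extended from $R$. The difference is in how the primary case is closed out. You isolate the identity $\overline{J\hat R}=\overline J\,\hat R$ for $\texttt{m}$-primary $J$ and prove it at the level of elements, approximating the coefficients of an integral dependence equation modulo $\texttt{m}^M\hat R$, absorbing the error $\eta\in\texttt{m}^M\subseteq J^d$ into the constant term, and letting $M\to\infty$. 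The paper never manipulates integral equations here: it writes $K=J\hat R$ (Nakayama applied to approximated generators), observes that $\overline K$ is itself $\texttt{m}\hat R$-primary and hence also extended, say $\overline K=I\hat R$ with $J\subseteq I$, then invokes Proposition 2.5 to get a reduction relation $I^{n_0+1}\hat R=JI^{n_0}\hat R$ and descends it by faithful flatness to $I^{n_0+1}=JI^{n_0}$, so that $I\subseteq\overline J$ by Proposition 2.5 again, giving $\overline K{}^n=I^n\hat R\subseteq J^{n-k}\hat R=K^{n-k}$. In short, the paper trades your equation-approximation for flat descent of a reduction equation, which is quicker once Proposition 2.5 is available; your route makes explicit a fact of independent interest (integral closure of $\texttt{m}$-primary ideals commutes with completion), and your closing remark about analytically ramified domains correctly pinpoints why primariness cannot be dropped. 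One small simplification: in your last step you do not need $\overline J\,\hat R$ to be primary to conclude, since every ideal of the noetherian local ring $\hat R$ is closed in the $\texttt{m}\hat R$-adic topology.
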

\begin{proof} It suffices to prove $i(\hat R)\leq i(R)$ by Proposition 2.7. If $i(R)=\infty$,  there is nothing to prove. In fact, such case cannot happen, we will  prove in the next section, $i(R)$ is always a finite number. Now, put $i(R)=k$,  and naturally regard $R$ as a sub-ring of $\hat R$. Let $K=(b_1, b_2,\cdots, b_r)$ be an arbitrary ideal of $\hat R$.

If $K$ is a $\texttt{m}\hat R$-primary ideal, then there exists an integer $t$ such that  $\texttt{m}^t\hat R\subseteq K$. By the definition of completion of a ring, we can choose elements $a_i\in R$ such that $b_i-a_i$ lies in $m^{t+ 1}\hat R$ for $1\leq i\leq r$. Let $J $ be the ideal of $R$ which is generated by $a_1, a_2, \cdots, a_r$.  Clearly, $K \subseteq J\hat R + \texttt{m}K$. So it follows that $K=J\hat R$ by Nakayama lemma. Since the integral closure $\overline K$ of $K$ is also a $\texttt{m}\hat R$-primary ideal, there is an ideal $I$ of $R$, replacing $I$ by $I + J$, we may assume $J\subseteq I$, such that $\overline K = I\hat R$. By Proposition 2.5, $J\hat R$ is a reduction of $I\hat R$, and there is an integer $n_0$ such that $I^{n_0 +1}\hat R = JI^{n_0 }\hat R$. As $\hat R$ is faithfully flat over $R$, we obtain $I^{n_0 +1}= JI^{n_0}$.  Hence for $n\geq k$, $I^n\subseteq J^{n-k}$, and consequently
$${\overline K}^n = I^n\hat R\subseteq J^{n-k}\hat R = (J\hat R)^{n-k}=K^{n-k}.$$

For an arbitrary ideal $K$ of $\hat R$, we have for $n\geq k$ and all positive integer $i$
$${\overline K}^n\subseteq {\overline {K+\texttt{m}^i\hat{R}}}^n\subseteq ({K+\texttt{m}^i\hat{R}})^{n-k}\subseteq K^{n-k}+ \texttt{m}^i\hat R$$
by the case we have just proved. So ${\overline K}^n\subseteq K^{n-k}$ for $n\geq k$  by  Nakayama lemma again. This proves $i(\hat R)\leq i(R)$, and the proof of the proposition is complete.

\end{proof}

Now, we turn to giving  a lower bound for $i(R)$. We will show that if $R$ is of finite dimension $d>0$, then $i(R)$ is at least $d-1$. Let us begin with recalling Monomial Conjecture of Hochster which asserts that given any system of  parameters $x_1, x_2, \cdots, x_d$ of a $d$-dimensional local ring $R$,
then for all $n>1$
$$(x_1x_ 2\cdots x_ d)^n\notin (x_1^{n+1}, x_2^{n+1}, \cdots, x_d^{n+1}).$$
Hochster proved the conjecture in the equicharacteristic case [Ho1] [Ho2]. In mixed cases,  the conjecture remains undetermined, despite much effort. However,  Hochster [Ho1] pointed out,  for any system of  parameters $x_1, x_2, \cdots, x_d$ of $R$, there exists an integer $t_0$, such that  for all $t\geq t_0 $,  $x_1^t, x_2^t, \cdots, x_d^t$ satisfying Monomial Conjecture. In particular, we have:

\begin{lemma} If $R$ is a noetherian local ring of dimension $d>0$, then there exists a system of  parameters $x_1, x_2, \cdots, x_d$ of $R$
such that for all $n>0$
$$(x_1x_ 2\cdots x_ d)^n\notin (x_1^{n+1}, x_2^{n+1}, \cdots, x_d^{n+1}).$$

\end{lemma}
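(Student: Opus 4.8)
The plan is to reduce the statement directly to Hochster's observation recalled just above, so that the only real work lies in bookkeeping at the boundary value $n=1$.

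First I would fix any system of parameters $x_1, x_2, \ldots, x_d$ of $R$; such a system exists since $R$ is a $d$-dimensional noetherian local ring. Applying Hochster's observation to it, I obtain an integer $t_0$ so that the elements $x_1^{t_0}, x_2^{t_0}, \ldots, x_d^{t_0}$ satisfy the Monomial Conjecture. Put $z_i = x_i^{t_0}$. These again form a system of parameters of $R$, because $(z_1, \ldots, z_d)$ and $(x_1, \ldots, x_d)$ have the same radical. The whole point of raising to the single power $t_0$ is that the assertion we want for $z_1, \ldots, z_d$, namely
$$(z_1 z_2 \cdots z_d)^n \notin (z_1^{n+1}, z_2^{n+1}, \ldots, z_d^{n+1}),$$
is, upon substituting $z_i = x_i^{t_0}$, exactly the statement that $x_1^{t_0}, \ldots, x_d^{t_0}$ satisfy the Monomial Conjecture. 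Thus Hochster's observation already delivers the desired non-containment for every $n$ in the range it guarantees, that is, for all $n>1$.

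It then remains only to handle $n=1$, which I would obtain from the case $n=2$ by an elementary monotonicity. The relevant fact is that failure of the non-containment propagates upward: if $(z_1 \cdots z_d)^n \in (z_1^{n+1}, \ldots, z_d^{n+1})$, then multiplying this membership through by $z_1 z_2 \cdots z_d$ and noting that $z_i^{n+1}(z_1 \cdots z_d) = z_i^{n+2}\prod_{j\neq i} z_j \in (z_i^{n+2})$, one gets $(z_1 \cdots z_d)^{n+1} \in (z_1^{n+2}, \ldots, z_d^{n+2})$. Taking the contrapositive, the non-containment at level $n=2$, which we already have, forces the non-containment at level $n=1$. Hence $z_1, \ldots, z_d$ is a system of parameters for which the stated property holds for all $n>0$.

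I do not expect a genuine obstacle here: all the depth resides in Hochster's observation itself (and, through it, in Hochster's theorem in the equicharacteristic case). The one thing to be careful about is the conceptual step of passing from the asymptotic formulation, involving the powers $x_i^t$ for all large $t$, to an all-$n$ statement for the single fixed system $z_i = x_i^{t_0}$; once that substitution is in place, only the trivial boundary adjustment at $n=1$ remains.
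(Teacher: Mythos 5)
Your proof is correct and takes essentially the same route as the paper: the paper presents the lemma as an immediate consequence of Hochster's observation (``In particular, we have:''), i.e., one passes to the powers $z_i = x_i^{t_0}$ of an arbitrary system of parameters, exactly as you do. Your explicit upward-propagation argument (containment at level $n$ forces containment at level $n+1$) to settle the boundary case $n=1$ is a sensible addition, since the paper's formulation of the Monomial Conjecture literally asserts the non-containment only for $n>1$.
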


By means of Lemma 2.9, we can prove the following interesting result.

\begin{proposition}
Let $R$ be a noetherian ring  dimension $d>0$. Then $i(R)\geq d-1$.
\end{proposition}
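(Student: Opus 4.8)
The plan is to reduce to the local case and then contradict the hypothesis $i(R)\le d-2$ by combining Lemma 2.9 with one explicit integral dependence relation. First I would use Corollary 2.4: since $\text{dim}(R)=d$ there is a maximal ideal $\texttt{m}$ with $\text{dim}(R_\texttt{m})=d$, and Corollary 2.4 gives $i(R)\ge i(R_\texttt{m})$, so it suffices to prove $i(R_\texttt{m})\ge d-1$. The case $d=1$ is trivial, so I would assume $d\ge 2$ and work in the local ring $R_\texttt{m}$.

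Next I would invoke Lemma 2.9 to fix a system of parameters $x_1,\dots,x_d$ satisfying $(x_1\cdots x_d)^n\notin(x_1^{n+1},\dots,x_d^{n+1})$ for all $n>0$, and then specialize to $n=d-1$. Set $K=(x_1^{d},\dots,x_d^{d})$. The heart of the argument is the claim $x_1\cdots x_d\in\overline K$: indeed $(x_1\cdots x_d)^d=x_1^d\cdots x_d^d$ is a product of the $d$ generators of $K$, hence lies in $K^d$, so $x_1\cdots x_d$ satisfies the integral equation $T^d-x_1^d\cdots x_d^d=0$ whose only nonzero coefficient lies in $K^d$. Raising to the power $d-1$ yields $(x_1\cdots x_d)^{d-1}\in\overline K^{\,d-1}$, while Lemma 2.9 with $n=d-1$ gives $(x_1\cdots x_d)^{d-1}\notin(x_1^{d},\dots,x_d^{d})=K$. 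Thus $\overline K^{\,d-1}\not\subseteq K$.

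Finally I would close the argument by contradiction. Suppose $i(R_\texttt{m})=k\le d-2$; then (1.1) gives $\overline K^{\,m}\subseteq K^{m-k}$ for every $m\ge k$. Taking $m=d-1\ge k$ and noting $(d-1)-k\ge 1$, we obtain $\overline K^{\,d-1}\subseteq K^{(d-1)-k}\subseteq K$, contradicting the previous paragraph; the case $i(R_\texttt{m})=\infty$ needs no argument. Hence $i(R_\texttt{m})\ge d-1$, and therefore $i(R)\ge d-1$.

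The only genuinely delicate point is the integral dependence claim $x_1\cdots x_d\in\overline K$, and I want to emphasize that the computation $(x_1\cdots x_d)^d\in K^d$ is a purely formal identity valid for arbitrary ring elements; in particular it does \emph{not} require $x_1,\dots,x_d$ to be a regular sequence, which matters because a system of parameters need not be regular when $R_\texttt{m}$ fails to be Cohen--Macaulay. The other step to get right is the exponent bookkeeping: one must choose $n=d-1$ precisely so that the monomial statement at level $n$ lines up with the comparison of $\overline K^{\,d-1}$ against $K^{(d-1)-k}$ at the critical value $k=d-2$.
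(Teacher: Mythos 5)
Your proof is correct, and its skeleton matches the paper's: localize at a maximal ideal of height $d$ (the paper cites Corollary 2.3, you cite Corollary 2.4 --- immaterial), dismiss $d=1$ trivially, and for $d\geq 2$ contradict Lemma 2.9 at $n=d-1$ by showing that $i(R)\leq d-2$ would force $(x_1\cdots x_d)^{d-1}\in(x_1^d,\dots,x_d^d)$. Where you genuinely diverge is in how you certify that $(x_1\cdots x_d)^{d-1}$ lies in the $(d-1)$-st power of the integral closure of $K=(x_1^d,\dots,x_d^d)$. The paper never observes that the single element $x_1\cdots x_d$ is integral over $K$; instead it introduces the auxiliary ideal $I=(x_1^d,x_1^{d-1}x_d,x_2^d,x_2^{d-1}x_d,\dots,x_{d-1}^d,x_{d-1}^{d-1}x_d,x_d^d)$, proves each $x_i^{d-1}x_d$ is integral over $(x_i^d,x_d^d)$ by exhibiting a reduction and invoking Proposition 2.5, concludes $I\subseteq\overline K$, and then realizes $(x_1\cdots x_d)^{d-1}$ as the product of the $d-1$ distinct generators $x_1^{d-1}x_d,\dots,x_{d-1}^{d-1}x_d$, which lies in $I^{d-1}\subseteq \overline K^{\,d-1}\subseteq K^{(d-1)-k}\subseteq K$. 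Your substitute for all of this is the one-line monic equation $T^d-x_1^d\cdots x_d^d=0$ satisfied by $x_1\cdots x_d$, whose only nonzero coefficient visibly lies in $K^d$; this gives $x_1\cdots x_d\in\overline K$ straight from the definition of integral closure, bypassing Proposition 2.5, reductions, and the auxiliary ideal entirely, after which raising to the $(d-1)$-st power finishes. Both arguments arrive at the identical contradiction, and your exponent bookkeeping ($m=d-1\geq k$ and $(d-1)-k\geq 1$) is handled correctly, as is your remark that the identity $(x_1\cdots x_d)^d=x_1^d\cdots x_d^d$ asks nothing of the system of parameters. Your version is shorter and more self-contained; the paper's construction buys nothing extra for this particular bound beyond exercising the reduction criterion it had already developed.
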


\begin{proof} Choose a maximal ideal $\texttt{m}$ of $R$ such that $\text{ht}(\texttt{m})=d$. It is clear, $\text{dim} (R_\texttt{m}) = d$. By Corollary 2.3, $i(R)\geq i(R_\texttt{m})$. Replacing $R$ by $R_\texttt{m}$, we may assume $R$ is a local ring with the maximal ideal $\texttt{m}$.

If $d=1$, it follows from the definition of integral dimension that $i(R)\geq 0$, and thus the conclusion is true in this case.

Now we assume $d\geq 2$. By Lemma 2.9, there  is a system of  parameters $x_1, x_2, \cdots, x_d$ of $R$ such that the element
 $(x_1x_ 2\cdots x_ d)^n$ is not contained in the ideal $(x_1^{n+1}, x_2^{n+1}, \cdots, x_d^{n+1})$ for any $n>0$.

Let us consider the two ideals
 $I=(x_1^d, x_1^{d-1}x_d, x_2^d, x_2^{d-1}x_d, \cdots, x_{d-1}^d, x_{d-1}^{d-1}x_d, x_d^d)$
and  $J=(x_1^d, x_2^d, \cdots, x_d^d)$ of $R$. Since for each $i \ (1\leq i \leq {d-1})$
$$(x_i^{d-1}x_d)^d = (x_i^d)^{d-1}x_d^d\in (x_i^d, x_d^d)(x_i^d, x_i^{d-1}x_d, x_d^d)^{d-1},$$
it yields  that
$$(x_i^d, x_i^{d-1}x_d, x_d^d)^d = (x_i^d, x_d^d)(x_i^d, x_i^{d-1}x_d, x_d^d)^{d-1}.$$
By Proposition 2.5, $x_i^{d-1}x_d$ is integral over the ideal $(x_i^d, x_d^d)$, and thus $x_i^{d-1}x_d$
is integral over $J$ for $i \ (1\leq i\leq d-1)$. It shows $I$ is generated by elements which are integral over $J$, and thus $I$ is integral over $J$.

Suppose that $i(R)<d-1$. We must have $I^{d-1}\subseteq J$ by the definition of integral  dimension. In particular, it implies the multiplication of the following $d-1$ elements
 $$x_1^{d-1}x_d, \cdots, x_{d-1}^{d-1}x_d$$  lies in $J$, i.e.
$$(x_1x_2\cdots x_d)^{d-1}\in (x_1^d, x_2^d, \cdots, x_d^d),$$
and this contradicts  the choices of  $x_1, x_2, \cdots, x_d$. Therefore $i(R)\geq d-1$, and this proves the conclusion.

\end{proof}

As an immediate consequence of Corollary 2.4 and  Proposition 2.10, we have:

\begin{corollary} Let  $R$ be a noetherian ring. If $i(R) < \infty $, then $\text {dim} (R)< \infty.$
\end{corollary}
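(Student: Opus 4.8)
The plan is to combine the localization formula for dimension with the two cited results and read off a uniform bound on the dimensions of the localizations of $R$. Suppose $i(R)=k<\infty$. I will bound $\dim(R_\texttt{m})$ uniformly over all maximal ideals $\texttt{m}$ of $R$, and then invoke the identity $\dim(R)=\sup_\texttt{m}\dim(R_\texttt{m})$ recalled just before Corollary 2.4.

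First I would fix a maximal ideal $\texttt{m}$ and apply Corollary 2.3 with $T=R\setminus\texttt{m}$ to obtain $i(R_\texttt{m})\leq i(R)=k$; equivalently one may quote Corollary 2.4, which asserts $i(R)=\sup_\texttt{m} i(R_\texttt{m})$ and hence forces $i(R_\texttt{m})\leq k$ for every $\texttt{m}$. The essential point here is simply that finiteness of the integral dimension descends to each localization.

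Next I would feed this bound into Proposition 2.10. Writing $d_\texttt{m}=\dim(R_\texttt{m})$, if $d_\texttt{m}>0$ then $R_\texttt{m}$ is a noetherian local ring of positive dimension, so Proposition 2.10 gives $i(R_\texttt{m})\geq d_\texttt{m}-1$; combined with the previous step this yields $d_\texttt{m}\leq k+1$. When $d_\texttt{m}=0$ the inequality $d_\texttt{m}\leq k+1$ holds trivially. Hence $\dim(R_\texttt{m})\leq k+1$ for every maximal ideal $\texttt{m}$ of $R$.

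Finally I would pass to the supremum: $\dim(R)=\sup_\texttt{m}\dim(R_\texttt{m})\leq k+1<\infty$. I do not expect any genuine obstacle, since all the substance is already packaged in Proposition 2.10, and the deduction is essentially a bookkeeping argument. The only point requiring a little care is to treat the dimension-zero localizations separately, so that the uniform bound $d_\texttt{m}\leq k+1$ is genuinely established for \emph{all} maximal ideals $\texttt{m}$ before the supremum is taken.
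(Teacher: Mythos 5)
Your proof is correct and follows essentially the same route as the paper, which derives the corollary as an immediate consequence of Corollary 2.4 (equivalently Corollary 2.3 plus the localization formula for dimension) and Proposition 2.10. The only elaboration you add is the explicit bookkeeping — $\dim(R_\texttt{m})\leq i(R_\texttt{m})+1\leq i(R)+1$ for every maximal ideal $\texttt{m}$, with the zero-dimensional case noted separately — which is exactly what the paper leaves implicit.
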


Motivated by Proposition 2.10, we will say  a noetherian local ring $R$ of dimension $d$ is of   minimal integral dimension if $i(R)= d-1$.
It follows from  the Lipman-Sathaye theorem [LS], $i(R)\leq d-1$ for every
regular ring of  dimension $d>0$. So by Proposition 2.10 , we
have:

\begin{corollary} Let $R$ be a regular local ring of dimension $d>0$. Then $R$ is of minimal integral dimension, i.e. $i(R)=d-1$.
\end{corollary}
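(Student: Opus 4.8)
The plan is to establish the equality $i(R) = d-1$ by proving the two inequalities $i(R) \leq d-1$ and $i(R) \geq d-1$ separately, and then combining them. Both will follow almost immediately from results already in hand, so the work is in assembling them correctly rather than in any new calculation.

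For the upper bound I would invoke the Lipman-Sathaye theorem, which asserts that for a regular ring $R$ of dimension $d>0$ and every ideal $J$ of $R$ one has $\overline{J^n} \subseteq J^{n-d+1}$ for all $n \geq d-1$. The only real idea needed is to pass from the integral closure of the power, $\overline{J^n}$, to the power of the integral closure, $\overline{J}^n$; this is supplied precisely by Corollary 2.6, which gives $\overline{J}^n \subseteq \overline{J^n}$ for every integer $n$. Chaining the two inclusions yields $\overline{J}^n \subseteq J^{n-d+1} = J^{n-(d-1)}$ for all ideals $J$ and all $n \geq d-1$. By the definition of integral dimension, this shows that $k = d-1$ satisfies condition $(1.1)$, and hence $i(R) \leq d-1$.

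For the lower bound I would simply apply Proposition 2.10, which states that any noetherian ring of dimension $d>0$ satisfies $i(R) \geq d-1$. Since a regular local ring of dimension $d$ has $\dim(R) = d > 0$ by hypothesis, the proposition applies directly and gives $i(R) \geq d-1$. Combining this with the upper bound yields $i(R) = d-1$, that is, $R$ is of minimal integral dimension.

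I do not expect any genuine obstacle here, since each inequality is an immediate consequence of a previously established result. The single conceptual point worth emphasizing in the write-up is the reduction from $\overline{J^n}$ to $\overline{J}^n$ via Corollary 2.6, which is exactly what converts the classical Brian\c{c}on-Skoda-type bound of Lipman and Sathaye into the statement about powers of integral closures that the definition of $i(R)$ requires.
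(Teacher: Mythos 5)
Your proposal is correct and follows essentially the same route as the paper: the upper bound $i(R)\leq d-1$ from the Lipman--Sathaye theorem and the lower bound from Proposition 2.10. In fact your write-up is slightly more careful than the paper's, since you make explicit the step $\overline{J}^n\subseteq\overline{J^n}$ (Corollary 2.6) needed to convert the Lipman--Sathaye bound on $\overline{J^n}$ into a bound on $\overline{J}^n$, a step the paper leaves implicit.
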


   In the rest of the section, we consider the local ring $R$ with $i(R)=0$. Clearly every ideal of $R$ is integrally closed for such rings.  By Proposition 2.10, we have $\text{dim} R\leq 1$.  If $R$ is an artinian local ring,  then every ideal $I$ of $R$ with $I\neq R$ is integral over the zero ideal $0$.  So $I=0$, and $R$  must be a field. For $\text{dim} R=1$, we have

\begin{proposition} Let $(R, \texttt{m})$ be a noetherian local ring with  $i(R)=0$. If $R$ is not a field, then $R$ is a one dimensional regular local ring.
\end{proposition}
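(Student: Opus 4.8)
The plan is to deduce from $i(R)=0$ that $R$ is reduced and one-dimensional, then to show that $R$ coincides with its integral closure in its total ring of fractions, and finally to read off that $R$ is a discrete valuation ring.

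First I would record the consequences of the hypothesis $i(R)=0$. Taking $n=1$ in the defining inclusion (1.1) gives $\overline{J}\subseteq J$, so every ideal of $R$ is integrally closed; and since the nilradical $Q$ satisfies $Q^{k+1}=0$ with $k=i(R)=0$, we get $Q=0$, i.e.\ $R$ is reduced. As already observed before the statement, Proposition 2.10 forces $\dim R\le 1$, and an artinian local ring with $i(R)=0$ must be a field; since $R$ is assumed not to be a field it is not artinian, so $\dim R=1$.

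Next I would prove that $R$ is integrally closed in its total ring of fractions $K=S^{-1}R$, where $S$ denotes the set of nonzerodivisors. Given $t\in K$ integral over $R$, write $t=b/a$ with $a\in S$ and $b=ta\in R$. Multiplying a monic equation $t^n+c_1t^{n-1}+\cdots+c_n=0$ with $c_i\in R$ by $a^n$ turns it into $b^n+(c_1a)b^{n-1}+\cdots+(c_na^n)=0$, whose coefficient of $b^{n-i}$ lies in $(a)^i$. Hence $b\in\overline{(a)}$, and since $(a)$ is integrally closed we get $b\in(a)$; writing $b=ra$ and cancelling the nonzerodivisor $a$ yields $t=r\in R$. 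Thus $\overline{R}=R$, so $R$ is normal.

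Finally I would upgrade normality to the conclusion. The idempotents of $K$ are roots of $X^2-X$, hence integral over $R$ and therefore in $R$ by the previous step; as $R$ is local it has no nontrivial idempotents, so $R$ has a single minimal prime, i.e.\ $R$ is a domain. A one-dimensional normal noetherian local domain is a discrete valuation ring, equivalently a regular local ring of dimension $1$, which is the assertion. I expect the main obstacle to be the middle step: one must run the integral-dependence bookkeeping over the total ring of fractions of a possibly non-domain, being careful that every element of $K$ has a nonzerodivisor denominator and that the integral equation for $t$ really yields an integral equation for $b$ over the \emph{principal} ideal $(a)$. This passage from ``every principal ideal is integrally closed'' to ``$R$ is normal'' is precisely where the hypothesis $i(R)=0$ is genuinely used.
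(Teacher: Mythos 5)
Your proof is correct, but it takes a genuinely different route from the paper's on the key structural step. The paper first proves that $R$ is a \emph{domain} by a bare-hands element argument: assuming minimal primes $P_1,\dots,P_r$ with $r\ge 2$, it picks $a\in P_1\setminus\bigcap_{i\ge 2}P_i$ and $b\in\bigcap_{i\ge 2}P_i\setminus P_1$, observes $ab=0$ so that $a^2-(a+b)a=0$, whence $a$ is integral over $(a+b)R$ and thus $a\in (a+b)R$; writing $a=(a+b)c$ then yields a contradiction whether $c$ is a unit or lies in the maximal ideal. Only afterwards does it run the ``multiply the integral equation by the denominator'' computation, inside the fraction field of the now-known domain, to get normality and conclude via [Ma] that $R$ is regular. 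You instead run that same computation \emph{first}, over the total ring of fractions $K=S^{-1}R$ --- which is legitimate, since every denominator is a nonzerodivisor, so the derived equation for $b$ descends from $K$ to $R$ and the final cancellation of $a$ is valid --- and then extract domain-ness from idempotents: any idempotent of $K$ is a root of $X^2-X$, hence integral over $R$, hence in $R$, hence trivial because $R$ is local. This is slicker and uses the hypothesis $i(R)=0$ exactly once, in its natural generality; but note that your last step silently invokes the structure theorem that the total quotient ring of a reduced noetherian ring is a finite product of fields, one factor per minimal prime (this is what guarantees that $r\ge 2$ would produce a nontrivial idempotent in $K$). That standard fact should be cited or proved; the paper's longer explicit argument with the elements $a$, $b$, $c$ is precisely what lets it avoid appealing to it. Both proofs end the same way, with the classical fact that a one-dimensional integrally closed noetherian local domain is regular.
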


\begin{proof} As we point out as above, under the assumption of $R$, $R$ must be a local ring of dimension 1. Since every ideal of $R$   is integrally closed, it follows that $R$ is  reduced  because the nilradical of $R$ is integral over the zero ideal.

Now, we show that $R$ is a domain. It suffices to prove that $R$ has only one minimal prime ideal. Let $P_1, P_2, \cdots, P_r$ are all the minimal
prime ideals of $R$.

If $r\geq 2$, we can choose elements $a, b$ such that
$$a\in P_1, a\notin \cap_{i=2}^rP_i \ \text{and} \ b\in \cap_{i=2}^rP_i, b\notin P_1.$$
It is obvious $ab=0, (a+b)\notin P_1$ and $ a^2-(a+b)a=0.$  It shows $a$ is  integral over the ideal $(a+b)R$,  and thus  $a\in (a+b)R$ because of   $\overline{(a+b)R}=(a+b)R$.

Write $ a=(a+b)c$
for some $c\in R$. If $c\notin \texttt{m}$, then $c$ is a unit of $R$. So $$a+b=c^{-1}a\in P_1,$$
 and this contradicts  the choice of $a+b$. Hence $c\in \texttt{m}$, and it implies $a(1-c)=bc$. Thus $$a\in \cap_{i=2}^rP_i$$
 because $1-c$ is a unit of $R$, which  also contradicts the choice of $a$. Hence $r=1$ and $R$ is a domain.

Let $Q(R)$ be the field of the fractions of $R$. In order to prove that $R$ is a regular local ring, it suffices to show that $R$ is integrally closed in $Q(R)$ [Ma]. Let $\frac{a}{b}$ be an element of $Q(R)$ such that $\frac{a}{b}$ is integral over $R$, where $a, b\in R$.  Hence there exists an equation of the form
$$ (\frac{a}{b})^n +a_1(\frac{a}{b})^{n-1} + \cdots +a_n=0$$
where $a_i\in R$ for $(1\leq i\leq n)$. It yields that
$$ a^n +a_1ba^{n-1} + \cdots +b^na_n=0.$$
It implies  $a$ is integral over the ideal $bR$. Since $bR$ is integrally closed, it shows $a\in bR$, and thus $\frac{a}{b}\in R$. Therefore $R$ is a integrally closed domain, and the proof of the proposition is complete.

\end{proof}

 The result of Proposition 2.13 means that a one dimensional noetherian local ring with minimal integral dimension is a regular local ring.  It would be very interesting to study the properties of local rings with nonzero minimal integral dimension. We conjecture:

\begin{conjecture} Let $(R, m)$ be a reduced local ring with minimal integral dimension  $i(R)>0$. Then $R$ is a regular local ring.

\end{conjecture}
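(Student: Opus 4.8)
The plan would be an induction on $d=\dim R$, using a generic hyperplane section to reduce the higher-dimensional situation to the one-dimensional case already settled in Proposition 2.13. First I would pass to the completion: by Proposition 2.8 we have $i(R)=i(\hat R)$ and $\dim R=\dim\hat R$, while $R$ is regular if and only if $\hat R$ is, so it suffices to treat $\hat R$. The delicate point here is that $\hat R$ need not remain reduced unless $R$ is analytically unramified; I would either add excellence to the hypotheses or try to argue that $i(R)=d-1$ already forces analytic unramifiedness. Since $i(R)>0$ forces $d\ge 2$, the base of the induction is $d=1$, where minimality reads $i(R)=0$ and Proposition 2.13 delivers regularity directly.

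For the inductive step I would cut down by one element. Choose $a\in\texttt{m}\setminus\texttt{m}^2$ that is a nonzerodivisor, superficial for $\texttt{m}$, and such that $\bar R:=R/aR$ is again reduced; then $\dim\bar R=d-1$. If one can show that $\bar R$ again has \emph{minimal} integral dimension, i.e. $i(\bar R)=d-2$, then by the inductive hypothesis $\bar R$ is regular, so $\dim_{k}\bigl(\texttt{m}/(aR+\texttt{m}^2)\bigr)=\dim\bar R=d-1$; since $a\notin\texttt{m}^2$ this gives $\dim_{k}\texttt{m}/\texttt{m}^2=d$ and $R$ is regular. Because Proposition 2.10 already yields $i(\bar R)\ge (d-1)-1=d-2$, the entire step collapses to the single inequality $i(R/aR)\le i(R)-1$ for a suitably generic $a$.

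That inequality is exactly the comparison between $i(R)$ and $i(R/aR)$ that the introduction flags as unknown, and it is the main obstacle. One is tempted to extract regularity instead from a minimal reduction $J=(a_1,\dots,a_d)$ of $\texttt{m}$: since $J$ is a reduction, $\overline J=\overline{\texttt{m}}=\texttt{m}$, and $i(R)=d-1$ forces $\texttt{m}^{\,d-1+k}\subseteq J^{k}$ for all $k\ge 0$. For $d=1$ this immediately gives $\texttt{m}=J$ principal, recovering regularity; but for $d\ge 2$ this bound constrains only the maximal ideal and is satisfied by non-regular rings. For instance $k[[x,y,z]]/(xy-z^2)$ has reduction number one, so $\texttt{m}^{1+k}=J^{k}\texttt{m}\subseteq J^{k}$, yet it is not regular. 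Hence the force of $i(R)=d-1$ must be felt through ideals other than $\texttt{m}$, which is precisely why a direct multiplicity argument (aiming at $e(R)=1$ and invoking Nagata's theorem for formally equidimensional rings) does not suffice in the naive form suggested by the maximal ideal alone.

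I would therefore expect the real work to lie in manufacturing, from the generic element $a$, test pairs $I\subseteq I'$ in $\bar R$ and lifting them to $R$, so that an obstruction to $i(\bar R)\le d-2$ would produce an obstruction to $i(R)\le d-1$; controlling integral closures under the surjection $R\to R/aR$ is the crux, since persistence gives $\overline{I'}\bar R\subseteq\overline{I'\bar R}$ for free but the reverse comparison with powers is genuinely the open point. Maintaining reducedness of $\bar R$ under a Bertini/prime-avoidance choice of $a$, and handling a possibly finite residue field when passing to minimal reductions, are secondary technical issues I would dispatch via the faithfully flat base change $R\to R[X]_{\texttt{m}R[X]}$ — though Proposition 2.7 only gives $i(R)\le i(R[X]_{\texttt{m}R[X]})$, and the equality one really wants is itself a question the introduction lists as open.
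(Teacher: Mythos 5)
The statement you were asked to prove is Conjecture 2.14 of the paper: the author explicitly leaves it open, so there is no proof in the paper to measure yours against. Judged on its own terms, your proposal is a well-informed plan but not a proof, and you essentially concede this yourself: the entire inductive step collapses to the inequality $i(R/aR)\le i(R)-1$ for a suitably generic nonzerodivisor $a\in\texttt{m}\setminus\texttt{m}^2$ preserving reducedness, and that inequality is precisely the comparison between $i(R)$ and $i(R/aR)$ that the introduction flags as unknown (as is the equality $i(R)=i(R[X]_{\texttt{m}R[X]})$ you would need for the finite-residue-field reduction). What you have produced is therefore a reduction of one open problem to another open problem, together with a correct inventory of the secondary obstructions (reducedness need not survive completion unless $R$ is analytically unramified; reducedness of $R/aR$ requires a Bertini-type choice of $a$). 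None of your individual steps is wrong, but the missing ingredient is the heart of the matter, not a technicality, so the proposal cannot be counted as a proof or even as a near-proof.

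What your analysis does contribute, and it is genuinely useful, is the negative observation that the obvious shortcut fails: for a minimal reduction $J$ of $\texttt{m}$ one has $\overline{J}=\overline{\texttt{m}}=\texttt{m}$, so the hypothesis $i(R)=d-1$ yields only $\texttt{m}^{\,d-1+k}\subseteq J^{k}$, and your example $k[[x,y,z]]/(xy-z^2)$ (reduction number one, not regular) shows that this constraint on the maximal ideal alone cannot detect regularity once $d\ge 2$. This correctly explains why any proof must exploit the uniformity of the bound over \emph{all} ideals, exactly as the author's Proposition 2.13 does in dimension one by testing integral closedness against the principal ideals $(a+b)R$ and $bR$. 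A successful attack would likely need a higher-dimensional analogue of that style of argument --- manufacturing specific test ideals that witness non-regularity --- rather than the hyperplane-section induction, unless someone first establishes the $i(R/aR)$ comparison, which the paper itself poses as an open question.
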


\vspace* {0.3cm}

\section{Main result}

\ \ \ \ \ \ In this section, we will  present several sufficient conditions for a ring $R$ having finite integral
dimension.  Let us begin with recalling the  notion of uniform Artin-Rees property   studied  by Huneke [Hu].

Let $R$ be a noetherian ring, we say that $R$ has the uniform Artin-Rees property, if for every pair $N\subseteq M$ of finitely generated $R$-modules, there exists a number $k$, depending on $N, M$, such that for all ideal $I$ of $R$ and all $n\geq k$
$$I^nM\cap N\subseteq I^{n-k}N.$$

First of all, we observe that if $R$ has the  uniform Artin-Rees property,  then we can reduce the question when $i(R)$ is finite to the
the reduced case when $i(R_{red})<\infty$.

\begin{proposition}
Let $R$ be a noetherian ring. Then

{\rm{(i)}} $i(R_{red})\leq i(R)$.

{\rm{(ii)}} If $i(R_{red})<\infty$, then $i(R) < \infty, $ provided  that $R$ has the uniform Artin-Rees property.

\end{proposition}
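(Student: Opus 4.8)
The two statements are essentially independent, so the plan is to treat them separately, with (i) a short reduction and (ii) the substantive part.

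For part (i) the argument is a direct analogue of Corollary 2.3, with the quotient map $R\rightarrow R_{red}$ playing the role of localization. I would assume $i(R)=k<\infty$ (otherwise there is nothing to prove) and take an arbitrary ideal $K$ of $R_{red}$. Writing $K=JR_{red}$ for its preimage ideal $J\supseteq Q$, Proposition 2.1 gives $\overline{K}=\overline{JR_{red}}=\overline{J}R_{red}$. Reducing the inclusion $\overline{J}^n\subseteq J^{n-k}$ (valid for $n\geq k$) modulo $Q$ then yields $\overline{K}^n=\overline{J}^nR_{red}\subseteq J^{n-k}R_{red}=K^{n-k}$, whence $i(R_{red})\leq k=i(R)$.

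For part (ii), set $s=i(R_{red})<\infty$ and use that $Q$ is nilpotent, say $Q^m=0$. The first step is to transport the bound in $R_{red}$ back to $R$: reducing modulo $Q$ and applying Proposition 2.1 shows that the image of $\overline{J}$ in $R_{red}$ is $\overline{JR_{red}}$, so $i(R_{red})=s$ gives the key congruence
$$\overline{J}^n\subseteq J^{n-s}+Q \qquad (n\geq s). \quad (*)$$
The obstruction to finishing is the error term $Q$; the idea is to shrink it power by power until it vanishes, paying a bounded price in the exponent of $J$ at each stage. This is exactly where the uniform Artin-Rees hypothesis must enter.

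The engine of the iteration is as follows. I would apply the uniform Artin-Rees property to each of the finitely many module pairs $Q^j\subseteq R$ for $1\leq j\leq m-1$, obtaining numbers $k_j$, and set $K_0=\max_j k_j$. The crucial point is that these numbers are uniform over \emph{all} ideals, so they may be applied to the ideal $\overline{J}$ itself, which varies with $J$. I then prove by induction on $j$ that $\overline{J}^n\subseteq J^{a_j}+Q^j$ with $a_j=n-s-(j-1)(K_0+s)$, for $n$ large. In the inductive step, writing an element of $\overline{J}^n$ as $y+q$ with $y\in J^{a_j}$ and $q\in Q^j$, one checks (via Corollary 2.6 and $a_j\leq n$) that $q\in\overline{J}^{a_j}\cap Q^j$; uniform Artin-Rees applied with $I=\overline{J}$ gives $q\in\overline{J}^{a_j-K_0}Q^j$, and feeding this back into $(*)$ produces $q\in J^{a_j-K_0-s}+Q^{j+1}$, which advances the induction. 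When $j=m$ the term $Q^m$ vanishes, so $\overline{J}^n\subseteq J^{n-t}$ with $t=s+(m-1)(K_0+s)$ for all sufficiently large $n$; taking $k=\max(t,N_0)$ to absorb the finitely many small values of $n$ yields a single $k$ with $\overline{J}^n\subseteq J^{n-k}$ for all $n\geq k$ and all $J$, proving $i(R)\leq k<\infty$. The main obstacle is precisely this iteration, and specifically the observation that uniform Artin-Rees must be invoked with the moving ideal $I=\overline{J}$ rather than with powers of a fixed ideal; I expect the only delicate routine point to be the bookkeeping of the admissible range of $n$ at each stage.
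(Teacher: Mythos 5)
Your proof is correct and takes essentially the same route as the paper's: part (i) via Proposition 2.1, and part (ii) by establishing $\overline{J}^n\subseteq J^{n-s}+Q$ and then iterating the uniform Artin--Rees property (applied to the pairs $Q^j\subseteq R$ with the \emph{moving} ideal $\overline{J}$, which is exactly the paper's key use of uniformity) to push the error term into successively higher powers of $Q$ until it vanishes. The only differences are cosmetic bookkeeping: you use a single constant $K_0=\max_j k_j$ and element-wise phrasing, where the paper keeps the individual $k_j$ and writes ideal containments.
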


\begin{proof} (i)  If $i(R)=\infty$,  we must have $i(R_{red})\leq \infty$ by (i), and the conclusion holds in this case.

Now we assume $i(R)$ is a finite number and put $i(R)=k$.
For any ideal $K$ of $R_{red}$, there is an ideal $I$ with $Q\subseteq I$ such that $K=IR_{red}$. Note that  ${\overline I}^n \subseteq I^{n-k}$ for $n\geq k$. By Proposition 2.1, $\overline K ={\overline I}R_{red}$. So
$${\overline K}^n={\overline I}^nR_{red}\subseteq I^{n-k}R_{red}=K^{n-k}$$
for $n\geq k$,  this shows $i(R_{red})\leq k$, and the proof of (i) is complete.

 (ii)  Let $Q$ be the nilradical of $R$. If $Q=0$, then $R=R_{red}$, and there is nothing to prove. Now, we assume $Q\neq 0$.  Let $n_0$ be the positive integer such that $Q^{n_0}\neq 0$  and  $Q^{n_0+1}=0.$  Since $R$ has the uniform Artin-Rees property, for each $i  (1\leq i\leq n_0)$, there exists integer $k_i>0$ such that for all ideals $I$ of $R$ and all $n\geq k_i$
$$I^n\cap Q^i\subseteq I^{n-k_i}Q^i. \eqno (3.1)$$

Put $k=i(R_{red})$. By Proposition  2.1, for any ideal $I$ of $R$, $\overline {IR_{red}} = \overline {I}R_{red}$. It yields
$${\overline {I}}^nR_{red}\subseteq I^{n-k}R_{red}$$
for $n\geq k$. Equivalently, we have for $n\geq k$
$${\overline {I}}^n\subseteq I^{n-k} + Q.  \eqno (3.2)$$

Since $I^{n-k}\subseteq {\overline I}^{n-k}$, it shows
$${\overline {I}}^n\subseteq I^{n-k} + {\overline {I}}^{n-k}\cap Q.$$
Hence if $n\geq k+k_1$, it follows from (3.1)
$${\overline {I}}^n\subseteq I^{n-k} + {\overline {I}}^{n-k-k_1}Q,$$
and then by (3.2), we have for $n\geq n-2k-k_1$
$${\overline {I}}^n\subseteq I^{n-k} +( { {I}}^{n-2k-k_1}+Q)Q\subseteq I^{n-2k-k_1}+ Q^2. \eqno (3.3)$$

In the following, we will use induction on $i$ to prove
$${\overline {I}}^n \subseteq I^{n-ik-k_1-\cdots-k_{i-1}}+ Q^i.\eqno (3.4)$$
for $i\  (2\leq i \leq n_0+1) $ and for all $n\geq ik +k_1+\cdots+k_{i-1}$.

Clearly, the conclusion holds for $i=2$ by (3.3). Suppose that
$${\overline {I}}^n \subseteq I^{n-jk-k_1-\cdots-k_{j-1}}+ Q^j. $$
for some $j$ with $(2\leq j < n_0)$ and for all $n\geq jk +k_1+\cdots+k_{j-1}$. Then we have
$${\overline {I}}^n \subseteq I^{n-jk-k_1-\cdots-k_{j-1}}+ {\overline I}^{n-jk-k_1-\cdots-k_{j-1}}\cap Q^j.$$
Hence by (3.1), we have for $n\geq jk +k_1+\cdots+k_{j-1}+k_j$
$${\overline {I}}^n \subseteq I^{n-jk-k_1-\cdots-k_{j-1}}+ {\overline I}^{n-jk-k_1-\cdots-k_{j-1}-k_j} Q^j.$$
It follows from (3.2) that for $n\geq (j+1)k +k_1+\cdots+k_{j-1}+k_j$
$${\overline {I}}^n \subseteq I^{n-(j+1)k-k_1-\cdots-k_{j}}+  Q^{j+1}.$$
Hence by induction, (3.4) holds for all $i (2\leq i \leq n_0+1)$. Note that $Q^{n_0+1}=0$.  Thus we have shown
$${\overline {I}}^n \subseteq I^{n-(n_0+1)k-k_1-\cdots-k_{n_0}}.$$
for $n\geq (n_0+1)k+k_1+\cdots+k_{n_0}$ and for all ideals $I$ of $R$. Therefore $i(R)<\infty$, and this ends the  proof of (ii).

\end{proof}

 To give a sufficient condition for  a ring $R$ having the uniform Artin-Rees property, Huneke [Hu] studied two ideals  $T(R)$ and $CM(R)$ of $R$.

For an integer $k$, set $T_k(R)=\underset {I, n}{\bigcap} (I^{n-k}: \overline {I^n})$, where the intersection is taken over all $n$ and all ideals $I$ of $R$. We define $T(R)=\underset {k}{\bigcup} T_k(R)$. It is clear if $k$ increases, the ideals $T_k$ also increase. Hence $T(R)$ is an ideal of $R$. If $T(R)=R$, we say that $R$ has the uniform Brian\c{c}on-Skoda property. Clearly, $R$ has the uniform Brian\c{c}on-Skoda property if and only if there exists an integer $k\geq 0$ such that for $n\geq k$ and all ideals $I$ of $R$
$$\overline{I^n}\subseteq I^{n-k}.$$
By Corollary 2.6, one can see easily $i(R)<\infty$  if $R$ has the uniform Brian\c{c}on-Skoda property.

 We need the following notion  of standard  complex of free modules to describe the  definition of the ideal $CM(R)$. Recalling that  a complex of finitely generated free $R$-modules $ F\textbf{.}$:

              $$  0\rightarrow F_n\overset {f_n}\to  F_{n-1}\rightarrow\cdots\rightarrow
               {F_1}
               \overset {f_1}\to
               \ {F_0} $$
 is said to  satisfy the standard condition on rank if $\text{rank}(f_n) = \text{rank}(F_n)$,
 and $\text{rank}(f_{i+1})+\text{rank}(f_i) = \text{rank}(F_{i})$ for
 $1\leq i < n$. Here we think $f_i$ as given by a matrix  $A_i$, and the
 rank of $f_i$ is the determinantal rank of $A_i$. Let $I(f_i)$ be
 the ideal generated by the rank-size minors of $A_i$.  We say that $F\textbf{.}$ satisfies the standard
condition on  height if $\text{ht}(I(f_i)\geq i $ for all $i$.
Note that the height an ideal $I$ is $\infty$ if $I = R$.
 We define $CM(R)$
to be the ideal generated by all elements $a\in R$ such that for all complexes
$F\textbf{.}$ of finitely generated free $R$-modules satisfying
the standard conditions on height and rank,
$x\text{H}_i(F\textbf{.}) = 0$ for all $i\geq 1$.

As an easy consequence of  Buchsbaum-Eisenbud criterion theorem
for exactness of a free complex [BE], it follows that $CM(R) = R$
if and only if $R$ is a  Cohen-Macaulay ring. One of the important questions concerning
$CM(R)$ is when $CM(R)$ is not contained in any minimal prime
ideal of $R$.  This question is closely related to the existence
of uniform local cohomological annihilators of $R$ which was introduced
by the author [Zh1].

Recall that an element $a\in R$  is said to be a  uniform local cohomological
annihilator of $R$, if

(i) $a$ is not contained  in  any minimal prime ideal of $R$,

(ii) For every maximal ideal $\texttt{m}$, $a$ kills the $i$-th local cohomology module
$\text{H}_\texttt{m}^i(R)$ for $i <\text{ht}(\texttt{m}).$

It is known [cf. Zh2] that $CM(R)$ is not contained in any minimal prime ideal of $R$ if and only if
$R$ has a uniform local cohomological annihilator. Moreover, Zhou proved  a noetherian domain of finite
dimension has   a uniform local cohomological annihilator, provided  $R$ is a homomorphic image of a Cohen-Macaulay ring
[Zh1] or $R$ is an excellent ring [Zh2].

By means of $T(R/P)$ and $CM(R/P)$ for every prime ideal $P$ of $R$, Huneke [Hu]  proved an important  criterion for rings having the uniform Artin-Rees property. The same proof of   Huneke  works for the following criterion, which replaces the condition  $T(R/P)\neq 0$ of [Hu, Theorem 3.4] by $i(R/P)<\infty$.

\begin{theorem} Let $R$ be a  noetherian ring  with infinite residue fields. Assume that for all nonzero prime ideals $P$ of $R$ the following conditions hold:

{\rm{(i)}} $i(R/P)<\infty.$

{\rm{(ii)}} $CM(R/P)\neq 0$.

\noindent Then $R$ has the uniform Artin-Rees property.

\end{theorem}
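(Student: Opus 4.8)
The plan is to follow the Noetherian induction of Huneke's proof of [Hu, Theorem 3.4] essentially verbatim, and to verify that at the place where that argument invokes $T(R/P)\neq 0$, the hypothesis $i(R/P)<\infty$ delivers the same usable consequence. Recall the roles of the two standing hypotheses. Since $R/P$ is a domain with unique minimal prime $(0)$, the condition $CM(R/P)\neq 0$ says that $CM(R/P)$ is not contained in a minimal prime; by the equivalence recalled above this means $R/P$ possesses a uniform local cohomological annihilator, and this element is what controls the relevant local cohomology (equivalently, the homology of the free complexes entering the definition of $CM(R/P)$) throughout the induction. The integral-closure input, in Huneke's version, is that $T(R/P)\neq 0$: this furnishes a nonzero $a\in R/P$ and an integer $k$ with $a\,\overline{I^{n}}\subseteq I^{n-k}$ for all ideals $I$ and all $n$, that is, after inverting $a$ the domain $(R/P)_{a}$ enjoys the uniform Brian\c{c}on--Skoda property, whence $i((R/P)_{a})<\infty$ by Corollary 2.6.

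First I would isolate exactly how this Brian\c{c}on--Skoda input is consumed. In Huneke's argument it enters through a reduction: one produces, by a Cramer's-rule (determinantal) relation of the kind used in Proposition 2.5, an ideal $I$ that is integral over a reduction $J$, and one then needs a uniform bound $I^{n}\subseteq J^{n-k}$. The key observation is that this reduction-power bound is precisely what finiteness of the integral dimension encodes: by the reformulation following Definition 1.1, $i(R/P)<\infty$ holds if and only if there is an integer $k$ with $I^{n}\subseteq J^{n-k}$ for $n\geq k$ whenever $I$ is integral over $J$. Thus $i(R/P)<\infty$ supplies exactly the containment Huneke extracts from $T(R/P)\neq 0$ --- and it does so globally on $R/P$, with multiplier $1$ and without inverting any element, so by Corollary 2.3 it descends to every localization the induction may require. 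Hence $i(R/P)<\infty$ may be substituted for $T(R/P)\neq 0$ at each occurrence.

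With this substitution in hand I would run the induction as in [Hu]. After the standard d\'evissage reducing the uniform Artin--Rees property for a pair $N\subseteq M$ to bounding intersections $I^{n}M\cap N$ for cyclic quotients, one performs Noetherian induction on $\operatorname{Spec}(R)$: for each nonzero prime $P$ one works in the domain $R/P$, combines its uniform local cohomological annihilator (from $CM(R/P)\neq 0$) with the reduction-power bound (from $i(R/P)<\infty$) to produce a uniform Artin--Rees number there, and then lifts the bounds back through the finitely many relevant primes. The infinite-residue-field hypothesis is used, exactly as in Huneke, to guarantee that minimal reductions generated by general elements exist, so that the determinantal relations and the passage to reductions are available.

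The main obstacle is to confirm that the substitution is faithful at every step, i.e.\ that Huneke's argument never genuinely requires the integral closure $\overline{I^{n}}$ of a power \emph{together with} its multiplier $a$, but only the power $\overline{J}^{\,n}$ of an integral closure --- equivalently, only reduction-power bounds of the form governed by $i(R/P)$. I would verify this by tracking each appearance of $T(R/P)$ in [Hu]: each one arises from an ideal made integral over a reduction, which is the situation $i(R/P)<\infty$ controls directly. Reconciling the multiplier-based containment $a\,\overline{I^{n}}\subseteq I^{n-k}$ with the multiplier-free power containment $\overline{J}^{\,n}\subseteq J^{n-k}$ at each such step --- and checking that the finitely many multipliers arising from the $CM$-side of the argument still assemble correctly once the Brian\c{c}on--Skoda multiplier is removed --- is where the care is required.
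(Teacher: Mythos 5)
Your proposal takes essentially the same route as the paper: the paper offers no argument for Theorem 3.2 beyond the remark that Huneke's proof of [Hu, Theorem 3.4] applies verbatim once $T(R/P)\neq 0$ is replaced by $i(R/P)<\infty$, and this is exactly the substitution you carry out, down to the roles you assign to $CM(R/P)\neq 0$ (uniform local cohomological annihilators controlling the standard complexes) and to the infinite residue fields (existence of reductions by general elements). Your key observation --- that Huneke consumes the Brian\c{c}on--Skoda hypothesis only through bounds $I^{n}\subseteq J^{n-k}$ for $J$ a reduction of $I$, which is precisely what $i(R/P)<\infty$ encodes (with multiplier $1$, so the only multiplier left to feed the Noetherian induction is the one from the $CM$ side) --- is a correct account of why the substitution is faithful, and indeed supplies more justification than the paper itself does.
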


Combining Theorem 3.2 with  the mentioned results of the author [Zh1] [Zh2] , we have:

\begin{corollary} Let $R$ be a finite dimensional noetherian ring with $i(R/P)<\infty$ for any prime ideal of $R$. If $R$ satisfies one of the following conditions:

{\rm{(i)}} $R$ is an excellent ring.

{\rm{(ii)}} $R$ is a quotient ring of a Cohen-Macaulay ring of finite dimension.

\noindent Then $R$ has the uniform Artin-Rees property.
\end{corollary}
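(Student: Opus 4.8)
The plan is to check the two hypotheses of Theorem 3.2 and then apply it, the only substantive input being condition (ii). Condition (i) of Theorem 3.2 is exactly the standing hypothesis of the corollary, so for every nonzero prime $P$ we already have $i(R/P)<\infty$. Thus the task reduces to establishing $CM(R/P)\neq 0$ for every nonzero prime $P$, and then to dealing with the fact that Theorem 3.2 presupposes infinite residue fields while the corollary does not.

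To verify condition (ii), fix a nonzero prime $P$ and observe that $R/P$ is a noetherian domain with $\dim(R/P)\leq\dim(R)<\infty$. Its only minimal prime is the zero ideal, so the quoted equivalence --- that $CM(A)$ avoids every minimal prime of $A$ precisely when $A$ carries a uniform local cohomological annihilator --- specializes here to: $CM(R/P)\neq 0$ if and only if $R/P$ has a uniform local cohomological annihilator. It therefore suffices to produce such an annihilator on the finite dimensional domain $R/P$ in each case. In case (i), excellence passes to quotients, so $R/P$ is excellent and [Zh2] supplies the annihilator. In case (ii), if $R$ is a homomorphic image of a finite dimensional Cohen--Macaulay ring $A$, then so is $R/P$ (compose $A\to R$ with $R\to R/P$), and [Zh1] supplies the annihilator. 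In either case $CM(R/P)\neq 0$, establishing condition (ii).

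The delicate point, which I expect to be the main obstacle, is the infinite-residue-field hypothesis of Theorem 3.2. The natural external device is to pass to a faithfully flat extension $S$ of $R$ whose residue fields are all infinite, such as the Nagata extension $S=R(X)$, for which $\operatorname{Spec}(S)\to\operatorname{Spec}(R)$ is a bijection and $S/PS\cong(R/P)(X)$; the uniform Artin--Rees property then descends along $R\to S$, since flatness commutes with the finite intersections involved and faithful flatness lets one cancel the extension. The trouble is that re-deriving the property on $S$ through Theorem 3.2 would require $i(S/PS)=i((R/P)(X))<\infty$, and this is \emph{not} delivered by $i(R/P)<\infty$, precisely because the behaviour of integral dimension under a polynomial or Nagata extension is one of the questions left open in the introduction. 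Consequently the residue-field reduction cannot be run externally on $S$ and must instead be carried out inside the proof of the criterion itself, in the manner of Huneke: one makes the residue fields infinite as the first step of that proof and then applies $i(R/P)<\infty$ and $CM(R/P)\neq 0$ to $R$ directly, so that the integral dimension is only ever invoked on $R$ and not on the extension. Checking that Huneke's argument retains this feature after $T(R/P)\neq 0$ is replaced by $i(R/P)<\infty$ is where the real work lies; granting it, conditions (i) and (ii) above complete the proof.
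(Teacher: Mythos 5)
Your proposal is correct and follows essentially the same route as the paper: the paper offers no written proof of this corollary beyond the single remark that it follows by ``combining Theorem 3.2 with the mentioned results of [Zh1] [Zh2],'' which is precisely your argument --- condition (i) of Theorem 3.2 is the standing hypothesis, and condition (ii) holds because $R/P$ is a finite dimensional domain that is excellent (case (i), annihilator from [Zh2]) or a homomorphic image of a finite dimensional Cohen--Macaulay ring (case (ii), annihilator from [Zh1]), so $CM(R/P)\neq 0$ by the stated equivalence with uniform local cohomological annihilators. The one place you go beyond the paper is the infinite-residue-field hypothesis: the paper's Theorem 3.2 carries that hypothesis while the corollary does not, and the paper simply passes over the discrepancy in silence. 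Your diagnosis of why the standard external fix fails is exactly right --- passing to $R(X)$ would require $i\bigl((R/P)(X)\bigr)<\infty$, which does not follow from $i(R/P)<\infty$ since the behaviour of integral dimension under polynomial extension is left open in the introduction --- so the reduction must indeed be performed inside the proof of the criterion, where Huneke's argument adjoins the variable before the uniform hypotheses are invoked. That point is a genuine gap in the paper's exposition rather than in your proposal; your treatment is the more careful of the two.
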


  The following remarkable result is proved by Huneke [Hu, Theorem 4.12], which gives three sufficient conditions for rings having the uniform
 Artin-Rees property. In particular, the theorem asserts that every local ring has   the uniform
 Artin-Rees property.

\begin{theorem}
 Let $R$ be a  noetherian ring of finite  dimension.  $R$ has the uniform Artin-Rees property
 if $R$ satisfies one of the following conditions:

{\rm{(i)}} $R$ is essentially of finite type over a local ring.

{\rm{(ii)}} $R$ is a ring of characteristic $p>0$, and $R$ is module finite over $R^p$.

{\rm{(iii)}} $R$ is essentially of finite type over the ring   of integer numbers $\textbf{Z}$.

\end{theorem}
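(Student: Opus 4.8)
The plan is to deduce the theorem from Huneke's own criterion for the uniform Artin-Rees property, namely [Hu, Theorem 3.4] in the form that uses $T(R/P)\neq 0$, rather than the variant with $i(R/P)<\infty$ recorded in Theorem 3.2 (which I deliberately avoid here, so as not to create a circularity with Theorem 3.6). That criterion is stated for rings with infinite residue fields, so I would first dispose of the finite-residue-field case by passing to $R(X)=R[X]_{W}$, where $W$ denotes the set of polynomials of $R[X]$ whose coefficients generate the unit ideal. This extension is faithfully flat, keeps the dimension finite, makes every residue field infinite, and---with a little care in case (ii)---preserves each of the hypotheses (i)--(iii); moreover one checks that the uniform Artin-Rees property descends along it. After this reduction, the criterion reduces the theorem to verifying, for every nonzero prime ideal $P$, the two conditions $CM(R/P)\neq 0$ and $T(R/P)\neq 0$. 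Since each quotient $R/P$ again satisfies whichever of (i)--(iii) $R$ does, these may be checked one domain at a time.

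The condition $CM(R/P)\neq 0$ amounts, by the discussion preceding Theorem 3.2, to the existence of a uniform local cohomological annihilator of the domain $R/P$. I would obtain it from Zhou's theorems [Zh1, Zh2], which furnish such an annihilator for any finite-dimensional noetherian domain that is either excellent or a homomorphic image of a Cohen-Macaulay ring. In all three cases $R/P$ is excellent: a ring essentially of finite type over an excellent local ring is excellent, an $F$-finite ring of characteristic $p$ is excellent, and a ring essentially of finite type over $\textbf{Z}$ is excellent (in case (i), where $S$ is only assumed local, one first replaces $S$ by its completion and descends). Hence $CM(R/P)\neq 0$.

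The condition $T(R/P)\neq 0$ is the essential difficulty, and it is here that the hypotheses enter in a characteristic-dependent way. Since $R/P$ is a domain it has characteristic $0$ or a prime $p$, and I would argue by cases. When the characteristic is $p$---this covers case (ii), the characteristic-$p$ special fibers in (iii), and the characteristic-$p$ quotients under (i)---I would, after reducing to the excellent situation, invoke the Hochster-Huneke theory of test elements [HH1]: the tight-closure Brian\c{c}on-Skoda theorem gives $\overline{I^{n}}\subseteq (I^{n-k})^{*}$ for a fixed $k$, while any test element $c$ satisfies $c\,(I^{n-k})^{*}\subseteq I^{n-k}$, so that $c$ is a nonzero element of $T(R/P)$. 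When the characteristic is $0$, I would instead use the Lipman-Sathaye theorem [LS]: presenting $R/P$ (after completion and a descent like that of Proposition 2.8) as a module-finite, generically smooth extension of a regular subring, its Jacobian ideal $D$ is nonzero and satisfies $D\,\overline{I^{n}}\subseteq I^{n-k}$ for all ideals $I$ and all large $n$, whence $0\neq D\subseteq T(R/P)$.

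With $CM(R/P)\neq 0$ and $T(R/P)\neq 0$ in hand for every nonzero prime, [Hu, Theorem 3.4] yields the uniform Artin-Rees property for $R(X)$, and the descent of the first paragraph transfers it back to $R$. The decisive obstacle is the verification that $T(R/P)\neq 0$: it is not formal and rests on two deep and technically unrelated machines---the Lipman-Sathaye algebraic Brian\c{c}on-Skoda theorem via the Jacobian ideal in equicharacteristic $0$, and the Hochster-Huneke construction of test elements in characteristic $p$. The subtlest point, and the reason the case of finite type over $\textbf{Z}$ is singled out, is the mixed-characteristic gluing: for such a domain one must combine the bound coming from the generic (characteristic $0$) fiber with the bounds coming from the finitely many special (characteristic $p$) fibers into a single uniform multiplier, even though these are produced by entirely different arguments and with a priori different constants.
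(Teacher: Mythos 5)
The paper contains no proof of this statement: it is Huneke's theorem, reproduced verbatim and attributed to [Hu, Theorem 4.12], and it enters the paper purely as a citation (it is then combined with Theorem 3.5 and Proposition 3.1 to prove Theorem 3.6). So the comparison here is between your sketch and Huneke's original argument rather than anything in the paper itself. On that score your outline is faithful: Huneke proves Theorem 4.12 by reducing to infinite residue fields, applying his criterion [Hu, Theorem 3.4] (the version with $T(R/P)\neq 0$ and $CM(R/P)\neq 0$, of which the paper's Theorem 3.2 is the $i(R/P)<\infty$ variant), and establishing $T(R/P)\neq 0$ via the Lipman-Sathaye Jacobian theorem in equicharacteristic zero and via tight-closure test elements in characteristic $p$; your identification of the nonvanishing of $T(R/P)$ and the mixed-characteristic gluing for $\textbf{Z}$-algebras as the real content is accurate, and your care to avoid Theorem 3.2 so as not to create circularity with Theorem 3.6 is well taken.

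There is, however, one step in your sketch that would fail as written. In case (i) the base ring $S$ is an arbitrary noetherian local ring, so $R/P$ need not be excellent, and your remedy --- verify $CM$ and $T$ nonvanishing for $(R/P)\otimes_S\hat S$ and then ``descend'' --- is not a formal move: both conditions assert the existence of a nonzero element with certain annihilation properties, and that element lives in the larger ring $(R/P)\otimes_S\hat S$; neither condition is known to descend along faithfully flat maps, and Proposition 2.8 of the paper, which you invoke as a model, is a statement about $i(\cdot)$ of local rings proved by an ad hoc argument with primary ideals and Nakayama's lemma, not a transplantable descent principle. The repair is to reorder the reductions: base change the whole ring first, $R\to B=R\otimes_S\hat S$ (faithfully flat, with $B$ essentially of finite type over the excellent ring $\hat S$), prove the uniform Artin-Rees property for $B$, and then descend that property itself along the faithfully flat map --- a descent which is formal, which you already carry out for $R(X)$, and which is exactly the order in which the paper handles the same difficulty in its proof of Theorem 3.6(i) via Proposition 2.7. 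A smaller caveat: deducing $CM(R/P)\neq 0$ from [Zh1] [Zh2] imports later machinery whose independence from [Hu, Theorem 4.12] you would need to check; Huneke's own proof obtains the nonvanishing of $CM$ directly for homomorphic images of Cohen-Macaulay rings, which suffices once the completion step has been done in the right order.
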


The another remarkable result of Huneke [Hu] is the following uniform Brian\c{c}on-Skoda  theorem [Hu, Theorem 4.13].

\begin{theorem}
 Let $R$ be a reduced noetherian ring of finite  dimension. $R$ has the uniform Brian\c{c}on-Skoda property if $R$ satisfies one of the following conditions:

{\rm{(i)}} $R$ is essentially of finite type over an excellent local ring.

{\rm{(ii)}} $R$ is a ring of characteristic $p>0$, and $R$ is module finite over $R^p$.

{\rm{(iii)}} $R$ is essentially of finite type over the ring   of integer numbers  $\textbf{Z}$.

\end{theorem}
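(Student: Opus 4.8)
The plan is to prove the equivalent assertion (recorded just before the statement) that $T(R)=R$, i.e. that there is one integer $k$ with $\overline{I^{n}}\subseteq I^{n-k}$ for every ideal $I$ and every $n\geq k$. I would open with two standard reductions. By a localization argument in the spirit of Corollary 2.4, together with a faithfully flat base change of the type used in Proposition 2.7 (replacing $R$ by the Nagata extension $R(X)=R[X]_{\texttt{m}R[X]}$, which has infinite residue field, the same dimension, and satisfies $\overline{IR(X)}=\overline{I}\,R(X)$), it suffices to treat the case where $R$ is local with infinite residue field. The gain is that every ideal $I$ then has a reduction $J\subseteq I$ generated by at most $d=\dim R$ elements, with $\overline{I}=\overline{J}$ and hence $\overline{I^{n}}=\overline{J^{n}}$ (Proposition 2.5, Corollary 2.6). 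Thus it is enough to bound $\overline{J^{n}}\subseteq J^{n-k}$ uniformly over ideals $J$ with at most $d$ generators, where $k$ may depend only on $d$.

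Next I would produce a single nonzerodivisor $c$ lying in $T(R)$, the three cases calling on their respective machinery. In case (ii) the tool is tight closure: the tight-closure Brian\c{c}on--Skoda theorem of [HH1] gives $\overline{J^{n}}\subseteq (J^{\,n-d+1})^{*}$ for $J$ generated by at most $d$ elements and $n\geq d$, while reducedness together with module-finiteness of $R$ over $R^{p}$ furnishes a test element $c$, a nonzerodivisor with $c\,(K)^{*}\subseteq K$ for all $K$. Composing, $c\,\overline{J^{n}}\subseteq J^{\,n-d+1}$, so $c\in T_{d-1}(R)$ and $T(R)\neq 0$. In cases (i) and (iii) the substitute for a test element is the Jacobian ideal of the Lipman--Sathaye theorem [LS]: presenting $R$ over a regular subring yields a nonzerodivisor $c$ in the Jacobian that annihilates $\overline{J^{n}}\big/J^{\,n-d+1}$, again giving $T(R)\neq 0$. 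Passing from an excellent local base, respectively from $\textbf{Z}$, down to characteristic $p$ (and lifting back in mixed characteristic) is handled by the usual spreading-out and reduction-to-characteristic-$p$ argument, constructibility being what keeps the bound uniform across the fibres.

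The heart of the proof is then the promotion of $T(R)\neq 0$ to $T(R)=R$, which I would carry out by Noetherian induction on $\dim R$ using the nonzerodivisor $c\in T(R)$ just obtained. Inverting $c$ turns it into a unit, so the bound with constant $k$ already holds on $\text{Spec}\,R\setminus V(c)$; on $V(c)$ one has $\dim R/cR<\dim R$, so the inductive hypothesis supplies a uniform Brian\c{c}on--Skoda bound for $R/cR$. Concretely, persistence of integral closure (as in Proposition 2.7) and that inductive bound give a modular containment $\overline{I^{n}}\subseteq I^{\,n-k_{0}}+cR$; feeding this into $c\,\overline{I^{n}}\subseteq I^{\,n-d+1}$ and applying the uniform Artin--Rees estimate $I^{m}\cap cR\subseteq I^{\,m-k_{1}}cR$ (legitimate precisely because of Theorem 3.4) clears the error term $cR$ and yields an honest uniform containment $\overline{I^{n}}\subseteq I^{\,n-k'}$.

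I expect this gluing step to be the main obstacle. Manufacturing an annihilator $c\in T(R)$ is essentially a citation of [HH1] or [LS], but converting the statement ``$c$ kills every $\overline{J^{n}}/J^{\,n-d+1}$'' into ``$1$ does so up to a uniform shift'' forces one to track how the Artin--Rees constant for the pair $cR\subseteq R$ interacts with the inductive constant for $R/cR$ and with the finitely many primes entering the induction; in mixed characteristic it additionally demands that the spreading-out bounds be genuinely uniform across fibres. This synthesis of [LS], [HH1] and [HH2] is exactly what is effected in [Hu, Theorem 4.13], and my plan is to follow that architecture.
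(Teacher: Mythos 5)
You should first be aware that the paper offers no proof of this statement at all: Theorem 3.5 is quoted, with explicit citation, as Huneke's uniform Brian\c{c}on-Skoda theorem [Hu, Theorem 4.13], and it enters the paper only as a black box in the proof of Theorem 3.6. There is therefore no internal argument to compare yours against; what you have written is, as you yourself acknowledge, a reconstruction of Huneke's proof, and its skeleton --- reductions generated by at most $d$ elements after making residue fields infinite, a single multiplier $c\in T(R)$ coming from test elements and the tight-closure Brian\c{c}on-Skoda theorem of [HH1] in characteristic $p$ and from the Lipman-Sathaye Jacobian theorem [LS] in the remaining cases, and uniform Artin-Rees (Theorem 3.4) to remove $c$ --- is indeed the skeleton of [Hu].

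Two points in your sketch need repair. First, ``it suffices to treat the case where $R$ is local with infinite residue field'' is not a valid reduction for a uniform statement: localizing produces a constant for each maximal ideal, and nothing bounds their supremum (this is exactly the distinction behind Corollary 2.4 of the paper). What is valid, and what you in effect use, is that the containment $c\,\overline{I^n}\subseteq I^{n-k}$ for a \emph{fixed} element $c$ and \emph{fixed} $k$ can be checked at each localization, where $d$-generated reductions exist; the element $c$ must then be chosen so that it works at every localization (a completely stable test element, or a fixed Jacobian element). Second, your Noetherian induction cannot apply the inductive hypothesis to $R/cR$: that ring is in general not reduced, and the property $\overline{I^n}\subseteq I^{n-k}$ for all ideals forces reducedness (take $I=0$, $n=k+1$, noting $\overline{0}$ is the nilradical); so the induction must either pass to $(R/cR)_{red}$ and clear nilpotents with uniform Artin-Rees, or, as Huneke in effect does, glue over the finitely many minimal primes $p_i$ of $R$, whose quotients are honest domains. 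In fact the gluing is simpler than what you describe: if $c\in T_{d-1}(R)$ is a nonzerodivisor, then uniform Artin-Rees for the single pair $cR\subseteq R$ gives $(I^m:c)\subseteq I^{m-k_1}$ for all ideals $I$ and $m\geq k_1$, whence $\overline{I^n}\subseteq (I^{n-d+1}:c)\subseteq I^{n-d+1-k_1}$ directly, with no induction at all; the inductive or componentwise argument is needed only in cases (i) and (iii), where Lipman-Sathaye produces multipliers on the domains $R/p_i$ rather than on $R$ itself.
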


Now,  we turn to  the main result of the paper. By means of  Proposition 3.1, Theorem 3.4 and Theorem 3.5, we can prove the following main result,  which is a weaker generalization of Theorem 3.5.

\begin{theorem}
 Let $R$ be a  noetherian ring of finite  dimension. Then $i(R)< \infty$
 if $R$ satisfies one of the following conditions:

{\rm{(i)}} $R$ is essentially of finite type over a noetherian local ring.

{\rm{(ii)}} $R$ is a ring of characteristic $p$, and $R$ is module finite over $R^p$.

{\rm{(iii)}} $R$ is essentially of finite type over the ring of integer numbers $\textbf{Z}$.
\end{theorem}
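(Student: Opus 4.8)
The plan is to derive all three cases from a single reduction scheme assembled from Proposition 3.1, Theorem 3.4, and Theorem 3.5. For a ring $R$ satisfying any of (i)--(iii), Theorem 3.4 guarantees the uniform Artin--Rees property, so Proposition 3.1(ii) reduces the task to proving $i(R_{red})<\infty$. Since $R_{red}$ is reduced and has the same (finite) dimension as $R$, the goal becomes to exhibit the uniform Brian\c{c}on--Skoda property for $R_{red}$: once that is in hand, $i(R_{red})<\infty$ follows from Corollary 2.6 (as already observed after the definition of $T(R)$), and then $i(R)<\infty$ by Proposition 3.1(ii). Thus the whole argument rests on checking that, in each case, $R_{red}$ falls under Theorem 3.5.

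For cases (ii) and (iii) this is routine, and I would simply verify that $R_{red}$ inherits the hypothesis. If $R$ has characteristic $p$ and is module finite over $R^p$, then $R_{red}=R/Q$ again has characteristic $p$; since the image of $R^p$ in $R/Q$ is exactly $(R_{red})^p$, the ring $R_{red}$ is module finite over $(R_{red})^p$, and Theorem 3.5(ii) applies. If $R$ is essentially of finite type over $\textbf{Z}$, then so is the quotient $R_{red}$ (a quotient of a localization of a finitely generated $\textbf{Z}$-algebra is again of this form), and Theorem 3.5(iii) applies. In both cases $R_{red}$ has the uniform Brian\c{c}on--Skoda property and the conclusion follows.

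Case (i) is where I expect the main obstacle, and it is the only genuinely new point. The hypothesis provides only a \emph{noetherian} local base ring $S$, whereas Theorem 3.5(i) demands an \emph{excellent} local base. To bridge this gap I would pass to the completion $\hat S$ of $S$, which is complete and hence excellent, and form the base change $R':=R\otimes_S\hat S$. Since $S\to\hat S$ is faithfully flat, so is $R\to R'$, and therefore $i(R)\leq i(R')$ by Proposition 2.7; it then suffices to bound $i(R')$. Now $R'$ is noetherian of finite dimension and essentially of finite type over the excellent local ring $\hat S$, so it satisfies the stronger form of hypothesis (i). Applying the reduction scheme to $R'$ --- uniform Artin--Rees from Theorem 3.4(i), reduction to $R'_{red}$ by Proposition 3.1(ii), and then Theorem 3.5(i) for the reduced ring $R'_{red}$ (still essentially of finite type over $\hat S$) together with Corollary 2.6 --- yields $i(R')<\infty$, whence $i(R)<\infty$.

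The hard part, then, is precisely this base change in case (i): I must confirm that $R'=R\otimes_S\hat S$ remains noetherian, finite-dimensional, and essentially of finite type over $\hat S$ (all of which follow from $\hat S$ being noetherian local of finite dimension and the persistence of the property ``essentially of finite type'' under base change), and that $R\to R'$ is faithfully flat so that Proposition 2.7 transfers the finiteness back to $R$. A subtle point to keep in mind is that $R'$ need \emph{not} be reduced even when $R$ is, which is exactly why I invoke the reduction machinery (Proposition 3.1 together with Theorem 3.4) a second time on $R'$ rather than applying Theorem 3.5 to $R'$ directly. No comparable difficulty arises in cases (ii) and (iii), where $R_{red}$ transparently inherits the hypotheses.
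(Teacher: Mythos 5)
Your proposal is correct and takes essentially the same route as the paper: for case (i) the paper also base changes to the completion $\hat S$ (which is excellent), uses faithful flatness and Proposition 2.7 to transfer finiteness back, and then in all three cases applies Theorem 3.5 to the reduced ring to get $i(R_{red})<\infty$, concluding via Theorem 3.4 and Proposition 3.1(ii). The only minor difference is in case (ii), where your direct observation that the image of $R^p$ in $R/Q$ equals $(R_{red})^p$ is a cleaner verification than the paper's argument through the minimal primes of $R$ and the nilradical of $R^p$.
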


\begin{proof} (i)\  By Corollary 2.3, we may assume $R$ is of finite type over a local ring $(S, \texttt{m})$. Let
$\hat{S}$ be the completion of $S$ in the $\texttt{m}$-adic topology. It is known $\hat {S}$ is faithfully flat over $S$, and thus
$B=R\otimes_S\hat{S}$ is of finite type and faithfully flat over $\hat{S}$.  From Proposition 2.7, it suffices to prove $i(B)<\infty$. Replacing $R$ by $B$, and $S$ by $\hat{S}$, we may assume $R$ is of finite type over an excellent ring $S$.

Write $R=S[X_1, X_2, \cdots, X_r]/I$, where $I$  is  an ideal of the polynomial ring   $S[X_1, X_2, \cdots, X_r]$. Let $P_1, P_2, \cdots, P_t$ be all the minimal prime ideals of $I$. Set
$p_i=P_i\cap S$ for $1\leq i\leq t$. It is not difficult to see that
 $$R_{red}=S[X_1, X_2, \cdots, X_r]/P_1\cap P_2\cap\cdots\cap P _t$$
is of finite type over the reduced excellent ring $S/ p_1\cap p_2\cap\cdots\cap p_t $. By Theorem 3.5, $R_{red}$ has the uniform Brian\c{c}on-Skoda property. In particular $i(R_{red})<\infty$. Now from Theorem 3.4 and Proposition 3.1, we conclude $i(R)<\infty$, and this proves (i).

(ii)\  First,  we  show that $R_{red}$ is module finite over $(R_{red})^p$.
In fact, let $P_1, P_2, \cdots, P_r$ are all the minimal prime ideal of $R$. It is easy to see $R^p/(P_i\cap R^p)$ is naturally isomorphic to $(R/P_i)^p$ and $P_i\cap R^p= {P_i}^p$ is a prime ideal of $R^p$. Hence the nilpotent  ideal of $R^p$
$$I=P_1\cap P_2\cap \cdots \cap P_r\cap R^p = {P_1}^p\cap {P_2}^p\cap \cdots \cap {P_r}^p$$
 is an intersection of prime ideals of $R^p$. Conversely the nilradical ideal J is clearly contained in $I$,  so $I$ is the nilradical ideal of $R^p$. It is easy to see $P_i^p$ are all the distinct   minimal prime ideals of $R^p$.  Moreover, there is a natural isomorphism
 between  the rings $R^p/I$ and $({R_{red}})^p$. So if $R$ is module finite over $R^p$, then
 $R_{red}$ is module finite over $R^p/I$, and thus is module finite over $({R_{red}})^p$.

 Now from Theorem 3.5, $R_{red}$ has the uniform Brian\c{c}on-Skoda property. In  particular $i(R_{red})<\infty$. Now from Theorem 3.4 and Proposition 3.1, we conclude $i(R)<\infty$, and this ends the proof of (ii).

 (iii) \  Clearly, $R_{red}$ is also essentially of finite type over the ring of integer numbers $\textbf{Z}$. Hence from Theorem 3.5  $i(R_{red})<\infty$. The conclusion follows from Theorem 3.4 and Proposition 3.1, and this ends the proof of the theorem.

\end{proof}

An immediate consequence of Theorem 3.5 is the following finiteness property of $i(R)$ for any noetherian local ring $R$.

\begin{corollary}
Let $R$ be a noetherian local ring. Then $i(R)<\infty$.
\end{corollary}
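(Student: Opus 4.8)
The plan is to obtain the corollary as an immediate instance of Theorem 3.6(i); essentially all the content is already packaged there, so the work consists only in checking that an arbitrary noetherian local ring satisfies the two hypotheses of that theorem, namely finite dimension and being essentially of finite type over a local ring.

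First I would verify finite-dimensionality. For a noetherian local ring $(R,\texttt{m})$ one has $\text{dim}(R)=\text{ht}(\texttt{m})$, and by Krull's height theorem this height is bounded above by the minimal number of generators of $\texttt{m}$, which is finite because $R$ is noetherian. Hence $\text{dim}(R)<\infty$, so the standing finiteness-of-dimension assumption in Theorem 3.6 is automatic.

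Second I would note that $R$ is trivially essentially of finite type over a local ring, namely over $R$ itself: taking $S=R$ and the polynomial ring in no variables exhibits $R$ as a finitely generated $S$-algebra (indeed $R=S$), which is in particular a localization of a finitely generated $S$-algebra. Thus condition (i) of Theorem 3.6 holds with $S=R$, and applying that theorem yields $i(R)<\infty$.

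The main point is that there is no genuine obstacle here: both hypotheses of Theorem 3.6 are met for free by any noetherian local ring, the finiteness of dimension by elementary dimension theory and the essentially-of-finite-type condition because a ring is vacuously of finite type over itself. The corollary therefore simply records the special case $S=R$ of the main theorem, and this is exactly why it shows that integral dimension is a well-defined invariant throughout the class of local rings.
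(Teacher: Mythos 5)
Your proof is correct and is essentially the paper's own argument: the corollary is the special case $S=R$ of Theorem 3.6(i), with finiteness of $\text{dim}(R)$ supplied by Krull's height theorem (the paper's text attributes it to ``Theorem 3.5,'' but this is evidently a typo for Theorem 3.6, as the introduction confirms). Your explicit verification of the two hypotheses is exactly the routine check the paper leaves implicit.
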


 \vspace* {0.3cm}

\begin{list}
\bf {\textbf{References}}{} \small

\item[\rm{[AW]}] Andersson, M.,   Wulcan, E.: Global effective versions of the Brian\c{c}on-Skoda-Huneke theorem.
{\it Invent. Math. }  {\bf 200}, 607-651(2015)

\item[\rm{[BE]}]   Buchsbaum, D., Eisenbud, D.: What makes a
complexes exact. {\it J. Algebra}   {\bf 25}, 259-268(1973)

\item[\rm{[BS]}] Brian\c{c}on, J.,   Skoda,  H.: Sur la cl\'{o}ture int\'{e}grale d' un id\'{e}al de germes
de fonctions   holomorphes en un point de ${\textbf{C}}^n$.  {\it C.R. Acad. Sci. Paris
S$\acute{e}$r. A.}   {\bf 278}, 949-951(1974)

\item[\rm{[Ei]}] Eisenbud, D.: Commutative algebra. With a view toward algebraic geometry. {\it Graduate
Texts in Mathematics}  {\bf 160}, Springer, New York (1995)

\item[\rm{[Gr]}]  Grothendieck, A.:  Local Cohomology.
 {\it Lect. Notes Math.}  {\bf 41},  Springer-Verlag, New York
 (1963)

\item[\rm{[HK]}]   Herzog, J.,  Kunz,  E.: Der Kanonische Module
eines Cohen-Macaulay Rings. {\it Lecture Notes in Mathematics}
 {\bf 238},  Springer-Verlag, New York(1971)

\item[\rm{[Ho1]}]   Hochster, M.: Contracted ideals from integral extensions of regular rings.
 {\it Nagoya Math. J.}  {\bf 51}  25-43(1973)

\item[\rm{[Ho2]}]   Hochster, M.: Canonical elements in local cohomology modules and the direct
summand conjecture.  {\it  J. Algebra}  {\bf 84},  503-553(1983)

\item[\rm{[HH1]}]  Hochster, M., Huneke C.:  Tight closure,
invariant theory, and the Brian\c{c}on-Skoda   theorem. {\it J. Amer.
Math. Soc.}  {\bf 3}, 31-116(1990)

\item[\rm{[HH2]}]  Hochster, M., Huneke C.:  Infinite integral
extensions and big Cohen-Macaulay algebras. {\it Annals of math.}
{\bf 3}, 53-89(1992)

\item[\rm{[HH3]}] Hochster, M., Huneke C.: Comparison of symbolic
 and ordinary powers of ideals.   {\it Invent. Math.}   {\bf 147},
349-369(2002)

\item[\rm{[HS]}] Huneke, C., Swanson, I.: Integral Closure of Ideals, Rings and Modules. {\it  London Math. Soc.
Lect.  Note Ser.} {\bf 336}, Camb. Univ. Press 2006

\item[\rm{[Hu]}] Huneke, C.: Uniform bounds in noetherian rings.
{\it Invent. Math. }  {\bf 107}, 203-223 (1992)

\item[\rm{[H$\rm {\ddot{u}}$]}] H$\rm {\ddot{u}}$bl, R.:  Derivations and the integral closure of ideals, with an appendix
Zeros of differentials along ideals.  by I. Swanson.  {\it Proc. Amer.
Math. Soc.}  {\bf 127},  3503-3511 (1999)

\item[\rm{[It]}] Itoh, S. Integral closures of ideals of the principal class. {\it Hiroshima
Math. J.}  {\bf 17} 373-375 (1987)

\item[\rm{[Ku]}]  Kunz, E.: On noetherian rings of characteristic p. {\it Am. J. Math.}  {\bf 98}, 999-1013 (1976).

\item[\rm{[LS]}]  Lipman, J., Sathaye, A.: Jacobian ideals and a theorem of Brian\c{c}on-Skoda. {\it Mich.
Math. J.}  {\bf 28}, 199-222 (1981)

\item[\rm{[Ly]}] Lyubeznik, G.: A property of ideals in polynomial rings. {\it Proc. Am. Math. Soc.}
{\bf 98},  399-400 (1986)

\item[\rm{[Ma]}] Matsumura, H.: Commutative Ring Theory, {\it
Cambrige Univ. Press, New York}, (1986)

\item[\rm{[NR]}] Northcott, D., G., Rees, D.: Reductions of ideals in local rings. {\it  Proc.
Cambridge Phil. Soc.} {\bf 50},  145-158 (1954)

\item[\rm{[Re]}] Rees, D.: A note on analytically unramified local rings. {\it J. London
Math. Soc.}  {\bf 36},  24-28 (1961)

\item[\rm{[Sc]}]  Schenzel, P.:   Cohomological annihilators. {\it
Math. Proc. Camb. Phil. Soc.}  {\bf 91}, 345-350 (1982)

\item[\rm{[Sh1]}]  Sharp, R.:  Local cohomology theory in
commutative algebra. {\it Q. J.  Math. Oxford}   {\bf 21},
425-434 (1970)

\item[\rm{[Sw1]}] Swanson, I.:  Joint reductions, tight closure and the Brian\c{c}con-Skoda
Theorem.  {\it J. Algebra} {\bf 147},  128-136 (1992)

\item[\rm{[Sw2]}] Swanson, I.:  Joint reductions, tight closure and the Brian\c{c}con-Skoda
Theorem.  {\it J. Algebra} {\bf 170},  567-583 (1994)

\item[\rm{[Zh1]}]  Zhou, C.:  Uniform annihilators of local
cohomology.  {\it J. Algebra} {\bf 305}, 585-602 (2006)

\item[\rm{[Zh2]}]  Zhou, C.:  Uniform annihilators of local
cohomology of excellent rings. {\it J. Algebra} {\bf 315}, 286-300 (2007)

\end{list}

\end{document}